\newtheorem{definition}{Definition}
\newtheorem{lemma}{Lemma}
\newtheorem{theorem}{Theorem}
\newtheorem{corollary}{Corollary}
\newtheorem{proof}{Proof}
\newtheorem{remark}{Remark}
\newtheorem{assumption}{Assumption}
\providecommand{\keywords}[1]{\textit{\quad Key words }:  #1}
\newcommand*{\QEDB}{\null\nobreak\hfill\ensuremath{\square}}
\begin{document}

\title{Minimax Rates for High-dimensional Double Sparse Structure over $\ell_u(\ell_q)$-balls}

\author[1]{Zhifan Li}
\author[1]{Yanhang Zhang}
\author[1, 2]{Jianxin Yin}

\affil[1]{\footnotesize School of Statistics, Renmin University of China}
\affil[2]{\footnotesize Center for Applied Statistics and School of Statistics, Renmin University of China,
 	\texttt{jyin@ruc.edu.cn} }

\date{}
\maketitle \sloppy

\begin{abstract}
  In this paper, we focus on the high-dimensional double sparse structure, where the parameter of interest simultaneously encourages group-wise sparsity and element-wise sparsity in each group.   
  By combining the Gilbert-Varshamov bound and its variants, we develop a novel lower bound technique for the metric entropy of the parameter space, specifically tailored for the double sparse structure over $\ell_u(\ell_q)$-balls with $u,q \in [0,1]$.
  We prove lower bounds on the estimation error using an information-theoretic approach, leveraging our proposed lower bound technique and Fano's inequality.
  To complement the lower bounds, we establish matching upper bounds through a direct analysis of constrained least-squares estimators and utilize results from empirical processes.
  A significant finding of our study is the discovery of a phase transition phenomenon in the minimax rates for $u,q \in (0, 1]$.
  Furthermore, we extend the theoretical results to the double sparse regression model and determine its minimax rate for estimation error.
  To tackle double sparse linear regression, we develop the DSIHT (Double Sparse Iterative Hard Thresholding) algorithm, demonstrating its optimality in the minimax sense.
  Finally, we demonstrate the superiority of our method through numerical experiments.
\end{abstract}

\keywords{ double sparsity, Gaussian location model, Gilbert-Varshamov bound, iterative hard thresholding, minimax rates.
}

\begingroup\renewcommand\thefootnote{$\dagger$}
\footnotetext{This work is supported by National Key Research and Development Program of China (No. 2020YFC2004900) and the MOE Project of Key Research Institute of Humanities and Social Sciences (22JJD110001).  }

\section{Introduction}

Consider a Gaussian location model (GLM) with $n$ independent observations, described by the equation:
\begin{equation}\label{e1}
Y^i = \theta^* + Z^i,\qquad i \in [n],
\end{equation}
where $Y^i \in \mathbb{R}^{d\times m}$ represents the observation, $\theta^* = (\theta_1, \cdots, \theta_m)\in \mathbb{R}^{d\times m}$ is the parameter of interest, with $\theta_j\in \mathbb{R}^d$ for $j\in [m]$. 
The error term, denoted as $Z \in \mathbb{R}^{d\times m}$, is characterized by independent samples from the normal distribution $\mathcal{N}(0, \sigma^2)$. In this context, the parameter $\sigma$ denotes the level of noise present in the model.
We denote the sample mean by $\bar{Y} = \frac{1}{n}\sum_{i=1}^n Y^i$ and $\bar{Z} = \frac{1}{n}\sum_{i=1}^n Z^i$. Then, model \eqref{e1} is reformulated as 
\begin{equation*}\label{model2}
  \bar{Y} =  \theta^* + \bar{Z},
\end{equation*}
where the entries of $\bar{Z}$ are independently drawn from $\mathcal{N}(0, \frac{\sigma^2}{n})$.
The goal of model \eqref{e1} is to infer the parameter $\theta^*$ given the observations $\{Y^i\}_{i=1}^n$.
GLM is fundamentally important for a variety of statistical problems, such as wavelet Gaussian regression \cite{donoho1994minimax,donoho1995adapting,donoho1998minimax}, model selection \cite{birge2001gaussian,barron1999risk,wu2013model}, and multiple testing problem under sparsity assumption \citep{abramovich2006adapting,abramovich2007optimality}.

Nowadays, various types of sparsity structures are assumed in high-dimensional inference, for example, the element-wise sparsity \cite{T1996, zhang2010, raskutti2011minimax,zhangtong2018, Zhu202014241} and group-wise sparsity \citep{Y2006, zhao2009composite, tsy2011}.
In more general form, many researchers consider assumptions of hard sparsity and soft sparsity\citep{donoho1994minimax,raskutti2011minimax}.
To be specific, hard sparsity means the parameter of interest has an exact number of non-zero elements, while soft sparsity requires the absolute magnitude to decay at a specific rate, and most of the signals may be non-zero.
A widely used approach to capture the notion of these two types of sparsity is $\ell_q$-balls for $q \in [0,1]$.
Suppose that $\eta$ is a $p$-length vector. The $\ell_0$-ball, i.e., in the hard sparsity case, is defined as 
\begin{equation*}
  \mathbb{B}^p_0(s_0) = \{\eta \in \mathbb{R}^p :\ \|\eta\|_{0} = \sum_{i=1}^p \mathbb{I}(\eta_{i} \neq 0) \leq s_0\},
\end{equation*}
where $s_0$ is the exact sparsity level and $\mathbb{I}(\cdot)$ stands for the indicator function. 
On the other hand, for $q \in (0, 1]$, i.e., in the soft sparsity case, the $\ell_q$-ball is defined as
\begin{equation*}
  \mathbb{B}^p_q(R_q) = \{\eta \in \mathbb{R}^p :\ \|\eta\|_q^q = \sum_{i=1}^p | \eta_i|^q \leq R_q\},
\end{equation*}
where $R_q^{1/q}$ is the corresponding radius of the $\ell_q$-ball.
For the case $q=0$, we require the sparsity level $s_0=R_0$ to simplify the notation.
In this paper, we focus our attention on the double sparse structure, that is, simultaneous element-wise and group-wise sparsity. 
We define double sparsity as follows:
\begin{definition}[Double sparsity]\label{df1}
  Given $q \in [0,1]$, $\theta \in \mathbb{R}^{d\times m}$ is called $(s, R_q)$-double sparse when the index set of the columns of $\theta$ belongs to an $\ell_0$-ball while the $j$-th column $\theta_j$ belongs to an $\ell_q$-ball, that is, 
  \begin{equation}\label{e4}
    \|\theta\|_{0,2} = \sum_{j=1}^m \mathbb{I}(\|\theta_{j}\|_2 \neq 0) \leq s\quad  \mbox{and}\quad \theta_j \in \mathbb{B}^d_q(R_q)\ \text{for}\ j \in [m].
  \end{equation}
\end{definition}
The definition of double sparsity can be considered as a combination of $\ell_0$-ball and $\ell_q$-ball, i.e., $\ell_0(\ell_q)$-ball for $q \in [0, 1]$.
Intuitively, the $m$ columns of $\theta$ naturally separate the entries of $\theta$ into $m$ non-overlapping groups, where each group contains $d$ variables.
Double sparsity encourages sparsity across and within the groups simultaneously.
Here we denote the corresponding parameter spaces of \eqref{e4} for $q=0$ and $q \in (0, 1]$ as $\Theta^{m,d}_0(s, s_0)$ and $\Theta^{m,d}_q(s, R_q)$, respectively. 

The parameter spaces of \eqref{e4} can be considered as the Khatri-Rao product of two spaces $\Gamma$ and $\Lambda$.
Let $\Gamma = \{\gamma: \gamma \in \{0, 1\}^m\ \text{and}\ \|\gamma\|_0 \leq s\}$ and $\Lambda \subseteq B^m$ for a given finite set $B \subset \mathbb{R}^d\backslash \{\bf{0}\}$. 
Define the Khatri-Rao product of $\Gamma$ and $\Lambda$ as $\Gamma \odot \Lambda=\{\left(\gamma_1 \otimes \lambda_1, \ldots, \gamma_m \otimes \lambda_m\right): \gamma \in \Gamma \text { and } \lambda \in \Lambda\}$, where $\otimes$ denotes the Kronecker product of two vectors.
% \begin{align}\label{eq1}
%   \begin{split}
%  \Gamma \otimes \Lambda=\{\theta=(\gamma, \lambda): \gamma \in \Gamma \text { and } \lambda \in \Lambda\}.
% \end{split}
% \end{align}
The specific forms of  $\Theta^{m,d}_0(s, s_0)$ and $\Theta^{m,d}_q(s, R_q)$ are presented as
\begin{align*}\label{ap12}
  \begin{split}
  &\Theta^{m,d}_0(s, s_0)=\Gamma \odot \Lambda\ \text{for}\ B=\mathbb{B}^d_0(s_0)\backslash \{\bf{0}\},\\
  &\Theta^{m,d}_q(s, R_q)=\Gamma \odot \Lambda\ \text{for}\ B=\mathbb{B}^d_q(R_q)\backslash \{\bf{0}\}.
\end{split}
\end{align*}
Here the index set $\gamma \in \Gamma$ records the locations of the non-zero columns of $\theta$ while 
$\lambda \in \Lambda$ determines the entries of $\theta$.

In addition, a  more challenging scenario is the $\ell_u(\ell_q)$-ball for $u,q \in (0, 1]$, which we define as the double soft sparsity:
\begin{definition}[Double soft sparsity]
    Define the parameter space of $\ell_u(\ell_q)$-ball with radius $R$ for $u,q \in (0, 1]$ as
\begin{equation}\label{lqq}
	\Theta_{u,q}^{m,d}(R) = \left\{\theta \in \mathbb{R}^{m \times d}: \sum_{i=1}^m\left(\sum_{j=1}^d|\theta_{ij}|^{q}\right)^{\frac{u}{q}}\le R\right\}.
\end{equation}
\end{definition}

The parameter space \eqref{lqq} allows us to control the behavior of both the outer sparsity (determined by $u$) and the inner sparsity (determined by $q$) simultaneously, using a common radius $R$. This framework provides a unified approach for investigating the joint sparsity structure in the double soft sparsity setting.
\subsection{Related literature}
In recent years, the double sparse structure has garnered significant attention due to its wide range of applications in various fields. One such application is in genome-wide association studies (GWAS), where genes belonging to the same pathway are grouped together. It is commonly observed that only a small subset of these groups contains causal single nucleotide polymorphisms (SNPs), while the number of non-causal SNPs far exceeds that of causal SNPs \citep{silver2013pathways}. Additionally, the double sparse structure has found applications in classification \cite{rao2015classification,huo2020sgl}, climate data analysis \cite{zhang2020efficient}, and multi-attribute graph estimation \cite{tugnait2021sparse}, among others.

Due to the prevalence of the double sparse structure in various applications, numerous computationally-feasible methods have been developed to tackle this problem.
A common approach to address this issue is through the use of a composition of two penalties, known as bi-level selection \cite{breheny2009penalized}.
Within the framework of bi-level selection, several composite methods have been proposed. For instance, \cite{huang2009group} introduced the group bridge method, which applies a bridge penalty at the group level and an $\ell_1$ penalty at the individual level.
They demonstrated the group selection consistency of the group bridge under certain regularity conditions.
Additionally, the composite MCP \cite{huang2012selective} and group exponential lasso \cite{breheny2015group} were proposed for bi-level selection.
\cite{simon2013sparse} extended the ordinary Lasso \cite{T1996} and group lasso \cite{Y2006} to the double sparse case by simultaneously incorporating these two penalties, naming it as sparse group lasso.
To expedite the convergence of sparse group lasso, notable approaches have been developed \citep{ida2019fast, zhang2020efficient}.
However, despite their promising performance in practical applications \cite{breheny2009penalized, chatterjee2012sparse, zhang2020efficient}, these methods lack rigorous theoretical guarantees. Therefore, comprehensively analyzing and understanding the theoretical properties of these methods remain open challenges that need to be addressed.

Recently, \cite{cai2019sparse} conducted an in-depth statistical analysis of the sparse group lasso, establishing minimax lower bounds on the estimation error for double sparse linear regression. They also demonstrated that the sparse group lasso achieves an optimal minimax rate of convergence under moderate settings. However, their focus was primarily on the double sparse structure over the $\ell_0(\ell_0)$-ball, rather than considering more general sparsity structures such as soft sparsity. Notably, \cite{raskutti2011minimax} extensively investigated the minimax rates of estimation error for high-dimensional linear regression models over $\ell_q$-balls, providing theoretical insights into the general sparsity case.

Efficient methods for estimating high-dimensional sparse structures remain a highly active area of research in statistics and machine learning. Among the most widely studied methods are the lasso \cite{T1996} and its variants \citep{Y2006, jacob2009group, friedman2010note, simon2013sparse}. However, these convex estimators suffer from inherent estimation bias. To address this limitation, non-convex methods such as Iterative Hard Thresholding (IHT) have gained attention. IHT and its variants have shown promise in solving high-dimensional linear regression problems (where the dimensionality $p$ is much larger than the sample size $n$) \citep{blumensath2009iterative, jain2014iterative, ndaoud2020scaled}. In IHT, given the sparsity level $s$, a gradient descent step is initially applied to the parameter, followed by retaining the $s$ largest absolute values at each step.

Recently, several studies on Iterative Hard Thresholding (IHT) have shown that the IHT technique achieves the optimal rate for estimation error \citep{jain2014iterative, liu2020between, ndaoud2020scaled}.
In particular, \cite{ndaoud2020scaled} introduced a novel IHT procedure that dynamically updates the threshold geometrically at each step until it reaches a universal statistical threshold, rather than maintaining a fixed number of non-zero components as in standard IHT.
Building on this idea, they consider a specific sequence of thresholds that leads to the minimax optimality of the estimator.
Motivated by these advancements, we extend this IHT-style method to address high-dimensional double sparse linear regression and investigate the corresponding statistical guarantees.
\subsection{Our contributions}
The main contributions of our work can be summarized as follows:
\begin{itemize}
  \item  We develop a novel approach to construct the packing set of the parameter space using a combination of binary and Q-ary Gilbert-Varshamov bounds \citep{gilbert1952comparison}. This approach is applicable to estimation problems for simultaneously structured models, where the parameter of interest exhibits multiple sparse structures simultaneously. This construction is detailed in Lemma \ref{lem2}.
  \item Leveraging this novel technique, we establish minimax lower bounds over $\ell_u(\ell_q)$-balls for $u, q \in [0, 1]$. Additionally, we derive matching upper bounds using empirical process techniques. Importantly, we identify phase transition boundaries in the optimal error rates for $u, q \in (0, 1]$. These phase transitions depend on the relationships between $u$ and $q$, as well as between $m$ and $d$. The optimal rates are divided into three distinct regimes, each exhibiting different error behaviors. To the best of our knowledge, our work is the first to discover the phase transition phenomenon of double soft sparsity.
  \item We propose a novel algorithm called double sparse iterative hard thresholding (DSIHT) for solving high-dimensional double sparse linear regression problems. We prove that our proposed method achieves optimality in the minimax sense. Furthermore, our simulation studies demonstrate the superiority of our approach compared to traditional IHT and its variants for group selection under double sparsity.
\end{itemize}

\subsection{Organization of the paper}
The remainder of this paper is organized as follows.
After a brief discussion on notations and preliminaries in Section \ref{subsec21}, we present the lower bounds on $\ell_2$-loss over $\ell_0(\ell_q)$-balls in Section \ref{subsec1}, followed by the matching upper bounds in Section \ref{subsec2}.
In Section \ref{lqlq}, we extend the theoretical results for minimax rates to the case of $l_u(l_q)$-balls.
We then study the application of the double sparse structure in double sparse linear regression in Section \ref{sec3}.
First, we prove the optimal minimax rate of estimation error in Section \ref{subsec31}.
Next, we develop the DSIHT algorithm and prove its optimality in the minimax sense in Section \ref{subsec32}.
Section \ref{numerical} presents numerical experiments demonstrating the superiority of our proposed DSIHT algorithm over the traditional IHT and group IHT algorithms.
In Section \ref{sec4}, we provide the proof sketches of our main theorems, while all technical lemmas and detailed proofs can be found in the Appendix.
Finally, we summarize our study in Section \ref{conclusion}. 

\section{Main Results}\label{sec2}
\subsection{Notations and preliminaries}\label{subsec21}
In the rest of this paper, we use the following notations. For the given sequences $a_n$ and $b_n$, we say that $a_n = O(b_n)$ (resp $a_n  = \Omega(b_n)$) when $a_n \le cb_n$ (resp $a_n \ge c b_n$) for some
positive constant $c$. We write $a_n \asymp b_n$ if $a_n = O(b_n)$ and $a_n  = \Omega(b_n)$.
Denote $[m]$ as the set $\{1,2,\ldots,m\}$.
Denote $\mathbf{I}_p$ as the $p\times p$ identity matrix.
Denote $x \wedge y $ as the minimum of $x$ and $y$.
For a set $A$, denote $|A|$ as the cardinality of $A$.
For a vector $\eta$, denote $\|\eta\|_2$ as its Euclidean norm.
For a matrix $\theta$, denote $\|\theta\|_F$ as its Frobenius norm.
% For two matrices $\theta_1, \theta_2$, denote $\langle \theta_1, \theta_2 \rangle = \mbox{trace}(\theta_1^\top \theta_2)$.

To evaluate the performance of an estimator $\hat \theta$, (i.e., a measurable function of observations $\{Y^i\}_{i=1}^n$).
It is common to define a loss function $\mathcal{L}(\cdot, \cdot): \mathbb{R}^{d\times m} \times \mathbb{R}^{d\times m} \rightarrow \mathbb{R}$ and analyze the loss $\mathcal{L}(\hat\theta, \theta^*)$.
We define the loss function $\mathcal{L}(\cdot, \cdot)$ as 
\begin{equation*}
  \mathcal{L}(\hat\theta, \theta^*) = \|\hat \theta - \theta^*\|^2_F\quad \text{for all}\ \hat\theta,\theta^* \in \mathbb{R}^{d \times m}.
\end{equation*} 
In this paper, we call the estimation error measured by the $F$-norm as the $\ell_2$-loss for the matrix.
This section is devoted to establishing the lower and upper bounds on the following minimax risk:
\begin{align*}
  &\mathcal{M}(\Theta^{m,d}_0(s, s_0))=\min_{\hat \theta} \max_{\theta^* \in \Theta^{m,d}_0(s, s_0)}E\left[ \|\hat\theta-\theta^*\|_F^2\right],\\
  &\mathcal{M}(\Theta^{m,d}_q(s, R_q))=\min_{\hat \theta} \max_{\theta^* \in \Theta^{m,d}_q(s, R_q)}E\left[ \|\hat\theta-\theta^*\|_F^2\right].
\end{align*}

The concept of covering and packing numbers play an important role in our remaining analysis. 

\begin{definition}[Covering and Packing Numbers, \cite{raskutti2011minimax}] Consider a compact metric space consisting of a set $\mathcal{S}$ and a metric $\rho$ : $\mathcal{S} \times \mathcal{S} \rightarrow \mathbb{R}^{+}$
\begin{itemize}
  \item  An $\epsilon$-covering of $\mathcal{S}$ with respect to the metric $\rho$ is a collection $\left\{\theta^{1}, \ldots, \theta^{N}\right\} \subset \mathcal{S}$ such that for all $\theta \in \mathcal{S}$, there exists some $i \in\{1, \ldots, N\}$ with $\rho\left(\theta, \theta^{i}\right) \leq \epsilon$. The $\epsilon$-covering
number $N(\epsilon ; \mathcal{S}, \rho)$ is the cardinality of the smallest $\epsilon$-covering.
  \item A $\delta$-packing of $\mathcal{S}$ with repsect to the metric $\rho$ is a collection $\left\{\theta^{1}, \ldots, \theta^{M}\right\} \subset \mathcal{S}$ such that $\rho\left(\theta^{i}, \theta^{j}\right)>\delta$ for all distinct $i, j$. The $\delta$-packing number $M(\delta ; \mathcal{S}, \rho)$ is the cardinality of the largest $\delta$-packing.
\end{itemize}
\end{definition}

Covering and packing numbers provide essentially the same measure of the massiveness of a set.
In particular, the relation between covering number and packing number is described as $M(2\epsilon ; \mathcal{S}, \rho) \leq N(\epsilon ; \mathcal{S}, \rho) \leq M(\epsilon  ; \mathcal{S}, \rho)$.
These two quantities exhibit the same scaling behavior as $\epsilon \rightarrow 0$.
Additionally, the logarithm of the covering number $\log N(\epsilon ; \mathcal{S}, \rho)$ is known as the metric entropy of $\mathcal{S}$ with respect to $\rho$.

\begin{definition}[entropy number of $\ell_q$ ball \cite{triebel2010fractals,kuhn2001lower}]
  Consider a quasi-Banach space consisting a compact set $\mathcal{S}$ and a quasi-metric $\rho$. $N(\epsilon;\mathcal{S},\rho)$ denotes the covering number with radius $\epsilon$. 
 For $k=1,2,\ldots$ the dyadic entropy number is defined as
 $$
 \epsilon_{k}(\mathcal{S},\rho) \coloneqq \inf\{\epsilon>0: N(\epsilon;\mathcal{S},\rho) \le 2^{k-1}\}.
 $$
 Consider a $p$-dimensional vector space. Suppose $\mathcal{S}$ is a $\ell_q$ unit-ball and $\rho$ is the metric induced by $\ell_2$-norm. Then, we have the following  Sch{\"u}tt’s  Theorem:
 \begin{subnumcases} 
   {\epsilon_k(\mathbb{B}_{q}^{p}(1),\|\cdot\|_2)\asymp}
   1 & for $1\le k \le \log p$ \label{wzero}\\
   \left(\frac{\log(\frac{p}{k}+1)}{k}\right)^{\frac{1}{q}-\frac{1}{2}} & for $\log p \le k \le p $ \label{high} \\
   2^{-\frac{k-1}{p}}p^{\frac{1}{2}-\frac{1}{q}} & for $k\ge p$. \label{low}
 \end{subnumcases}
\end{definition}

% \subsection{High-dimensional setting}
By analogy with  \cite{raskutti2011minimax}, for the case $q = 0$, we require that $m \ge 4s \ge c_1$ and $d \ge 4s_0 \ge  c_2$ as well. For $q \in (0, 1]$, we add some reasonable assumptions that for some constants $C_1, C_2>0$ and $\delta  \in (0, 1)$, which requires that the triple $(n, d, R_q)$ satisfies
\begin{equation}\label{eq:Rq}
 \frac{d}{R_{q} n^{q / 2}} \geq C_{1} d^{\delta} {\geq} C_{2}.
\end{equation}
\begin{remark}
 We clarify the assumption \eqref{eq:Rq} in two aspects:
 \begin{itemize}
   \item [(1)] Our interest is high-dimensional regime where both $m$ and $d$ are much larger than $n$.
   \item [(2)] 
   Assumption \eqref{eq:Rq} matches the rate of the entropy number in \eqref{high}, which makes sense in high-dimensional sparsity. 
   Therefore, we can avoid the trivial situations i.e., \eqref{wzero} and \eqref{low}.
 \end{itemize}
\end{remark}

\subsection{Lower Bounds on $\ell_2$-loss}\label{subsec1}
Our first main result establishes the minimax lower bounds of estimation error over $\ell_0(\ell_q)$-balls.

\begin{theorem}\label{th1}
  Consider model \eqref{e1} under the double sparse structure.
  \begin{itemize}
    \item [(a)] Case $q=0$: 
    % The lower bound of estimation error holds that
    For any measurable estimator $\hat{\theta}$, the lower bound of estimation error
    % \begin{equation}\label{e5}
    %   \mathcal{M}(\Theta^{m,d}_0(s, s_0)) \geq C_{\ell}\frac{\sigma^2}{n}( s \log \frac{em}{s}+ss_0\log \frac{ed}{s_0})
    % \end{equation}
    \begin{equation}\label{e5}
     \inf_{\hat{\theta}} \sup_{\theta\in \Theta^{m,d}_0(s, s_0)}P\left(\|\hat{\theta}-\theta\|_F^2\geq C_{\ell}\frac{\sigma^2}{n}( s \log \frac{em}{s}+ss_0\log \frac{ed}{s_0})\right) \geq \frac{1}{2},
    \end{equation}
where $C_{\ell}$ is some positive constant. The expectation form of the minimax lower bound is
$$
\mathcal{M}(\Theta^{m,d}_0(s, s_0)) \geq \frac{1}{2}C_{\ell}\frac{\sigma^2}{n}( s \log \frac{em}{s}+ss_0\log \frac{ed}{s_0}).
$$
\item [(b)] Case $q \in (0, 1]$: 
% The lower bound of estimation error holds that
For any measurable estimator $\hat{\theta}$, the lower bound of estimation error
    % \begin{equation}\label{e6}
    %   \mathcal{M}(\Theta^{m,d}_q(s, R_q)) \geq C_{\ell}\{\frac{\sigma^2}{n}s\log\frac{em}{s}+s R_q(\frac{\sigma^2 }{n}\log d)^{1-\frac{q}{2}}\}
    % \end{equation}
    \begin{equation}\label{e6}
 \inf_{\hat{\theta}}  \sup_{\theta\in \Theta^{m,d}_q(s, R_q)}P\left(\|\hat{\theta}-\theta\|_F^2\geq C_{\ell}\{\frac{\sigma^2}{n}s\log\frac{em}{s}+s R_q(\frac{\sigma^2 }{n}\log d)^{1-\frac{q}{2}}\}\right) \geq \frac{3}{8},
    \end{equation}
where $C_{\ell}$ is some positive constant. The expectation form of the minimax lower bound is
$$
\mathcal{M}(\Theta^{m,d}_q(s, R_q)) \geq \frac{3}{8}C_{\ell}\{\frac{\sigma^2}{n}s\log\frac{em}{s}+s R_q(\frac{\sigma^2 }{n}\log d)^{1-\frac{q}{2}}\}.
$$
  \end{itemize}
\end{theorem}

Theorem \ref{th1} shows that the estimation error of $\theta^*$ involves a sum of two quantities: a term involving the complexity of identifying the non-zero columns (groups) and a term that corresponds to the complexity
of estimating parameters over each group.
Intuitively speaking, the first term, $s\log \frac{em}{s}$, in the lower bounds corresponds to the complexity of capturing $s$ non-zero columns while
the second term in $\eqref{e5}$ or $\eqref{e6}$ is related to the complexity of estimating parameter over $\ell_q$-balls ($q$=0 or $q$>0).

Moreover, when $m = s = 1$, the lower bounds in Theorem \ref{th1} reduce to $\Omega(s_0\log \frac{ed}{s_0})$ and $\Omega(R_q(\frac{\sigma^2}{n}\log d)^{1-\frac{q}{2}})$,
which are consistent with the lower bounds for recovery of sparse vectors over $\ell_q$-balls \cite{donoho1994minimax, raskutti2011minimax}.
By setting $d=s_0$, Theorem \ref{th1}(a) recovers the lower bound for estimating the non-overlapping group structure \cite{tsy2011}.

\begin{remark}
	Part (a) of Theorem \ref{th1} derives the same minimax lower bounds of estimation error as \cite{cai2019sparse}.
	Notably, the parameter space $\Theta^{m,d}_0(s, s_0)$ is slightly different from \cite{cai2019sparse}, where \cite{cai2019sparse} constrains the number of the overall nonzero elements rather than the nonzero elements in each group.
	In this work, we show that it is convenient to extend the lower bound technique from $\ell_0$-ball to $\ell_q$-ball over the parameter space $\Theta^{m,d}_q(s, R_q)$.
\end{remark}

\begin{remark}
  For $q \in (0, 1]$, we assume $R_q = s_0 \delta^q$. By restricting $s_0$ to the interval $[1, d^v]$ where $v \in (0, 1]$, we obtain the following inequality:
  \begin{equation*}
  C_1 n R_q^{\frac{2}{q}} d^{v\left(1-\frac{2}{q}\right)} \stackrel{(i)}{\leq} \log \frac{em}{s} \stackrel{(ii)}{\leq} C_2 n R_q^{\frac{2}{q}}.
  \end{equation*}
  If condition $(i)$ is violated, based on \eqref{eq:Rq}, we can observe that the first term is dominated by the second term in \eqref{e6}.
  On the other hand, if condition $(ii)$ is violated, the lower bound for $q \in (0, 1]$ is degenerate, resulting in:
  \begin{equation*}
    \mathcal{M}(\Theta^{m,d}_q(s, R_q)) \geq s R_q^{\frac{2}{q}},
  \end{equation*}
  which implies that the zero estimator is optimal in the minimax sense.
\end{remark}

\subsection{Upper Bounds on $\ell_2$-loss}\label{subsec2}

In this section, we turn to the analysis of the corresponding upper bounds on the $\ell_2$-norm minimax rate over $\ell_0(\ell_q)$-balls.
Here we consider the constrained least-squares estimators over the parameter spaces $\Theta^{m,d}_q(s, R_q)$:
\begin{align}\label{e7}
  \begin{split}
  \hat \theta_q \in \arg \min_{\theta \in \Theta^{m,d}_q(s, R_q)} \|\bar Y-\theta\|_F^2.
\end{split}
\end{align}

\begin{theorem}\label{th2}
  Consider model \eqref{e1} under the double sparse structure.
  \begin{itemize}
    \item [(a)] Case $q=0$: 
    % The upper bound of the estimation error holds that
    % \begin{equation}\label{eq:u1}
    %   \mathcal{M}(\Theta^{m,d}_0(s, s_0)) \leq C_u\frac{\sigma^2}{n}( s \log \frac{em}{s}+ss_0\log \frac{ed}{s_0})
    % \end{equation}
    % with probability greater than $1-C_1\exp \{-C_2 (s \log \frac{em}{s}+ss_0\log \frac{ed}{s_0})\}$ for some positive constants $C_u, C_1$ and $C_2$.
    Given estimator $\hat{\theta}_0$ defined in \eqref{e7} and any  $\epsilon^2 \geq C_u\frac{\sigma^2}{n}( s \log \frac{em}{s}+ss_0\log \frac{ed}{s_0})$, we have the estimator error 
\begin{equation}\label{eq:u1}
      \sup_{\theta \in \Theta^{m,d}_0(s, s_0)}\|\hat{\theta}_0-\theta\|_F^2\leq \epsilon^2,
\end{equation}
 holds with probability greater than $1-C_1\exp \{-C_2n\epsilon^2\}$ for some positive constants $C_u, C_1$ and $C_2$.
    \item [(b)] Case $q \in (0, 1]$: 
    % The upper bound of the estimation error holds that
    % \begin{equation}\label{eq:u2}
    %   \mathcal{M}(\Theta^{m,d}_q(s, R_q)) \leq C_u\{\frac{\sigma^2}{n}s\log\frac{em}{s}+s R_q(\frac{\sigma^2 }{n}\log d)^{1-\frac{q}{2}}\}
    % \end{equation}
    % with probability greater than $1-C_1\exp \{-C_2 n(\frac{s \log \frac{em}{s}}{n} + s R_q(\frac{\sigma^2v }{n}\log d)^{1-\frac{q}{2}})\}$ for some positive constants $C_u, C_1$ and $C_2$.
Given estimator $\hat{\theta}_q$ defined in \eqref{e7} and any  $\epsilon^2 \geq  C_u\{\frac{\sigma^2}{n}s\log\frac{em}{s}+s R_q(\frac{\sigma^2 }{n}\log d)^{1-\frac{q}{2}}\}$, we have the estimator error 
\begin{equation}\label{eq:u2}
      \sup_{\theta \in \Theta^{m,d}_q(s, R_q)}\|\hat{\theta}_q-\theta\|_F^2\leq \epsilon^2,
\end{equation}
 holds with probability greater than $1-C_1\exp \{-C_2n\epsilon^2\}$ for some positive constants $C_u, C_1$ and $C_2$.
  \end{itemize}
\end{theorem}

Theorem \ref{th2} establishes the matching upper bounds of the estimation error, and we can also derive the expectation form which implies that the lower bounds in Theorem \ref{th1} are tight. Additionally, Theorem \ref{th2} yields that the constrained estimators defined in \eqref{e7} are
rate-optimal. The results of Theorem \ref{th1} and \ref{th2} together show the minimax optimal rate up to constant factors.
	\begin{remark}
	The results of Theorem \ref{th1} and \ref{th2} together show the minimax optimal rate up to constant factors.
	In specific, the minimax rate for $q=0$ scales as 
	\begin{equation*}
		\mathcal{M}(\Theta^{m,d}_0(s, s_0)) \asymp \left\{\frac{\sigma^2}{n}( s \log \frac{em}{s}+ss_0\log \frac{ed}{s_0})\right\},
	\end{equation*}
	and for $q \in (0, 1]$, the minimax rate scales as
	\begin{equation*}
		\mathcal{M}(\Theta^{m,d}_q(s, R_q)) \asymp \left\{\frac{\sigma^2}{n}s\log\frac{em}{s}+s R_q(\frac{\sigma^2 }{n}\log d)^{1-\frac{q}{2}}\right\}.
	\end{equation*}
\end{remark}

\subsection{Minimax rates over $\ell_{u}(\ell_{q})$-balls}\label{lqlq}
To avoid over-complicated scenarios, we assume 
\begin{equation}\label{lqqcondition}
	\left\{\begin{array}{ll}
		\log m< R(\frac{n}{\log d})^{\frac{u}{2}} \le \frac{m}{\log m}\\
		\log m< R(\frac{n}{\log m})^{\frac{u}{2}} \le \frac{m}{\log m}\\
		\log d< R^{\frac{q}{u}}(\frac{n}{\log d})^{\frac{q}{2}} < \frac{d}{\log d}.
	\end{array}
	\right.
\end{equation}
In general, three important quantities, namely $R(\frac{n}{\log d})^{\frac{u}{2}}, R(\frac{n}{\log m})^{\frac{u}{2}}, R^{\frac{q}{u}}(\frac{n}{\log d})^{\frac{q}{2}}$, asymptotically equal to $m^{\epsilon_1}, m^{\epsilon_2}, d^{\epsilon_3}$ respectively, where $\epsilon_1, \epsilon_2, \epsilon_3 \in (0,1)$. However, for the purpose of our technical analysis, we need to establish boundary points.
For the analysis of lower bound, we set $R(\frac{n}{\log d})^{\frac{u}{2}}, R(\frac{n}{\log m})^{\frac{u}{2}}, R^{\frac{q}{u}}(\frac{n}{\log d})^{\frac{q}{2}}$ to lie in the intervals $[1,m], [1,m], [1,d]$, respectively. 
On the other hand, for  the analysis of upper bound, we set them to lie in the intervals $[\log m, \frac{m}{\log m}], [\log m, \frac{m}{\log m}], [\log d, \frac{m}{\log d}]$. 
Notably, this subtle mismatch does not affect the matching minimax risk established in our analysis.
\begin{theorem}\label{th4}
  Consider model \eqref{e1} under the double soft sparse structure defined in \eqref{lqq}, where $u,q \in (0, 1]$. Assume condition \eqref{lqqcondition} holds. We obtain the minimax rates as
	\begin{equation}\label{minimaxlqq}
	\mathcal{M}(\Theta_{u,q}^{m,d}(R)) =\inf_{\hat \theta} \sup_{\theta^* \in \Theta^{m,d}_{u,q}(R)} E\left[\|\hat\theta-\theta^*\|_F^2\right] \asymp \left\{\begin{array}{ll}
		R\left(\frac{n}{\log d}\right)^{\frac{u-2}{2}}+ R\left(\frac{n}{\log m}\right)^{\frac{u-2}{2}} & u > q\\
		R^{\frac{q}{u}}\left(\frac{n}{\log d}\right)^{\frac{q-2}{2}}+R\left(\frac{n}{\log m}\right)^{\frac{u-2}{2}} & u \le q,~ m > d\\
		R^{\frac{q}{u}}\left(\frac{n}{\log d}\right)^{\frac{q-2}{2}}  & u \le q,~ m \le d.
	\end{array}\right.
	\end{equation}
\end{theorem}
The optimal error rates in \eqref{minimaxlqq} exhibit different behaviors depending on the regimes.
Specifically, when $u > q$, the error rates are primarily influenced by the $\ell_u$ sparsity, while the $\ell_q$ sparsity does not significantly affect the error rates.
For $u \leq q$ and $m \geq d$, the error rates are jointly controlled by the $\ell_u$-ball and $\ell_q$-ball, representing a trade-off between these two risks.
On the other hand, when $m \leq d$, the error rate of the $\ell_u$-ball is dominated by the $\ell_q$-ball, resulting in a degeneration of the error rate as $R^{\frac{q}{u}}\left(\frac{n}{\log d}\right)^{\frac{q-2}{2}}$.
In this case, the error rate becomes independent of the number of groups $m$.

\section{Application to Linear Regression Model}\label{sec3}

In this section, we explore the double sparse linear regression model \cite{cai2019sparse, zhou2021high}, which directly applies the proposed sparse structure.
Consider a linear regression model with $n$ independent observations:
\begin{equation}\label{e12}
Y = X\beta^* + \xi = \sum_{j=1}^m X_{G_j}\beta^*_{G_j}+\xi.
\end{equation}
Here, $Y \in \mathbb{R}^{n}$ represents the response variable, $X\in \mathbb{R}^{n\times p}$ is the design matrix, and the random error term $\xi \in \mathbb{R}^n$ is a sub-Gaussian vector with parameter $\sigma^2$.
The parameter of interest, $\beta^* \in \mathbb{R}^{p}$, can be divided into $m$ predefined non-overlapping groups, each consisting of $d$ variables.
This implies that $p = m \times d$.
We denote the group structures as $\{G_j\}_{j=1}^m$ with $|G_j|=d$ for all $j \in [m]$.
Similar to definition \ref{df1}, we define the double sparse vector as follows:

\begin{definition}[Double sparse vector]\label{df3}
$\beta \in \mathbb{R}^{p}$ is called $(s, s_0)$-double sparse if it satisfies the following conditions:
\begin{equation}\label{double_vec}
\|\beta\|_{0,2} = \sum_{j=1}^m \mathbb{I}(\|\beta_{G_j}\|_2 \neq 0) \leq s\quad \mbox{and}\quad \beta_{G_j} \in \mathbb{B}^d_0(s_0)\ \text{for}\ j \in [m].
\end{equation}
\end{definition}
We denote the parameter spaces corresponding to \eqref{double_vec} as $\mathcal{B}^{m,d}_0(s, s_0)$.
In particular, the coefficient $\beta^*$ can be reshaped into a matrix $\theta^*$ as shown in \eqref{e1}.
Each group in $\beta^*$ corresponds to one column of $\theta^*$, i.e., $\beta^* = \mbox{Vec}(\theta^*)$.
Therefore, there is a close relationship between the parameter spaces corresponding to \eqref{e4} and \eqref{double_vec}.
Denote the support of $\beta \in \mathcal{B}^{m,d}_0(s, s_0)$ as $\mathcal{S}^{m,d}(s,s_0)$.
In specific, the support of $\beta\in \mathcal{B}^{m,d}_0(s, s_0)$ is characterized by supp$(\beta)=\{i \in [p]:\beta_i \neq 0, i \in [p]\}$.
Denote $S^*$ as supp$(\beta^*)$ and $G^*$ as $\{j \in [m]:\|\beta^*_{G_j}\|_{0,2} \neq 0, j \in [m]\}$.

It is evident that model \eqref{e1} is a special case of model \eqref{e12} when we set $n=p$ and $X = \sqrt{n}\mathbf{I}_n$.
Hence, the results from section \ref{sec2} can be extended with some additional conditions on the design matrix $X$.
In the remaining part of this section, we first extend the minimax rates for the estimation error \eqref{e9} to the double sparse linear regression model.
Subsequently, we develop a DSIHT algorithm and prove its optimality in the minimax sense.
\subsection{Estimation Error on $\ell_2$-loss}\label{subsec31}
We extend the sparse eigenvalue condition and restricted eigenvalue condition in \cite{raskutti2011minimax} to the double sparse linear regression, which is stated as the following assumption.

\begin{assumption}[Double Sparse Eigenvalue Condition]\label{ass1}
  For $\beta \in \mathbb{R}^p$, we assume 
  \begin{itemize}
    \item [(a)] Assume that there exists a positive constant $\tau_u < \infty$ such that
    \begin{equation*}
      \frac{1}{\sqrt{n}}\|X \beta\|_{2} \leq \tau_{u}\|\beta\|_{2}\ \ \text{for all}\ \ \beta \in \mathcal{B}^{m,d}_{0}(2 s, 2s_0).
    \end{equation*}
    \item [(b)] Assume that there exists a positive constant $\tau_{\ell}>0$ such that
    \begin{equation*}
      \frac{1}{\sqrt{n}}\|X \beta\|_{2} \geq \tau_{\ell}\|\beta\|_{2}\ \  \text{for all}\ \  \beta \in \mathcal{B}^{m,d}_{0}(2 s, 2s_0).
    \end{equation*}
  \end{itemize}
\end{assumption}

Assumption \ref{ass1} is a popular tool for analyzing high-dimensional linear regression.
It gives the range of the spectrum of the sub-matrices of $X$.
Another widely used assumption is the restricted isometry property (RIP, \cite{candes2005}).
RIP requires that the constants $\tau_u$ and $\tau_{\ell}$ are close to one.
In contrast, the constants in Assumption \ref{ass1} can be arbitrarily small and large, respectively.
In this regard, Assumption \ref{ass1} is a less stringent assumption compared to the RIP.

Combined with the technique used in Section 2, we directly obtain the results of the estimation error bounds for the high-dimensional double sparse linear regression.

\begin{corollary}\label{cor:1}
Consider model \eqref{e12} for a given design matrix $X \in \mathbb{R}^{n \times p}$.
Assume that Assumption 1 holds. Then, the minimax rate scales as
\begin{align}\label{e9}
  \begin{split}
    \inf_{\hat \beta} \sup_{\beta^* \in \mathcal{B}^{m,d}_0(s, s_0)}E\left[\|\hat \beta - \beta^*\|_2^2\right] \asymp
     \left\{\frac{\sigma^2}{n}( s \log \frac{em}{s}+ss_0\log \frac{ed}{s_0})\right\}.
  \end{split}
\end{align}
\end{corollary}

\subsection{Double Sparse Iterative Hard Thresholding Algorithm}\label{subsec32}

Refining this section, we introduce an extension to the traditional iterative hard thresholding (IHT) procedure as a constructive approach to addressing the problem of double sparse linear regression. 
To begin, we establish a new double sparse thresholding operator denoted as $\mathcal{T}_{\lambda,s,s_0}: \mathbb{R}^{p} \to \mathbb{R}^{p}$.

Subsequently, we proceed with the development of statistical guarantees for our proposed method, which serve to demonstrate its optimality in terms of convergence rate. 
The operator is achieved through a two-step process:

{\bf Step 1 (Element-wise thresholding)}: Define an element-wise hard thresholding operator $T_{\lambda}^{(1)}:\mathbb{R}^{p} \rightarrow \mathbb{R}^{p}$ as
$$
      \{\mathcal{T}^{(1)}_{\lambda}(\beta)\}_{j} = \beta_{j}\mathbb{I}(|\beta_{j}|\ge \lambda) , \quad \forall\ \beta \in \mathbb{R}^{p}.
$$

The operator $\mathcal{T}^{(1)}_{\lambda}$ refers to the traditional Iterative Hard Thresholding (IHT) operator, which preserves signals whose absolute magnitudes are greater than or equal to $\lambda$.

{\bf Step 2 (Group thresholding)}: Define the operator $\mathcal{T}^{(2)}_{\lambda,s,s_0}:\mathbb{R}^{p} \rightarrow \mathbb{R}^{p}$ as two parallel steps.
First of all, we reorder all elements in $\beta_{G_j} \in \mathbb{R}^d$ in descending order(of their absolute value) for all $j \in [m]$. After the following thresholding steps, the reordering is reversed back to the original natural order.

{\bf Inner-group thresholding}: Select the important entries of each group of $\beta$ and denote the index set as
\begin{equation}\label{eq:iht5}
    \mathcal{J}_{s} \coloneqq \{i \in [d]:\sum_{j=1}^m (\beta_{G_j})^2_i \ge s\lambda^2\},
\end{equation}  
where $(\beta_{G_j})_i$ represents the $i$th entry of $\beta_{G_j}$.

{\bf Outer-group thresholding}: 
Select the important groups of $\beta$ and denote the index set as
\begin{equation}\label{eq:iht6}
  \mathcal{J}_{s_0} \coloneqq \{j \in [m]:\|\beta_{G_j}\|_2^2 \ge s_0\lambda^2\}. 
\end{equation}

The Inner-group and Outer-group thresholding process apply a thresholding operation to filter out the entries within group and groups of $\beta$ that have magnitudes below a specific threshold, respectively.
From these two conditions, we define operator $\mathcal{T}^{(2)}_{\lambda, s, s_0}$ as
\begin{align*}
  (\{\mathcal{T}^{(2)}_{\lambda,s,s_0}(\beta)\}_{G_j})_i =
  \begin{cases}
    (\beta_{G_j})_i,\ &\text{if}\ i \in \mathcal{J}_{s}\ \mbox{and}\ j \in \mathcal{J}_{s_0}\\
    0, &\text{else}
  \end{cases}.
  \end{align*}

  We combine these two steps and define the operator $\mathcal{T}_{\lambda,s,s_0} = \mathcal{T}^{(2)}_{\lambda,s,s_0}\circ \mathcal{T}^{(1)}{\lambda}$.
  In simple terms, step 1 applies an element-wise hard thresholding operator, which retains the significant signals and removes the weak signals from $\beta$.
  Step 2 plays a crucial role in $\mathcal{T}_{\lambda,s,s_0}$. It uses a small rectangular region to further identify the important signal areas in $\beta$.
  For more details and motivation behind $\mathcal{T}_{\lambda,s,s_0}$, please refer to Section \ref{t3}.

  With the definition of the double sparse IHT operator $\mathcal{T}_{\lambda,s,s_0}$, 
we present the iterative hard thresholding algorithm for solving the double sparse linear regression as Algorithm \ref{alg:iht}.

\begin{algorithm}[htbp]
  \caption{\label{alg:iht}\textbf{D}ouble \textbf{S}parse \textbf{IHT} (DSIHT) algorithm}
  \begin{algorithmic}[1]
    \REQUIRE $X,\ Y,\ s,\ s_0,\ \kappa, \ \lambda_0,\ \lambda_{\infty}$.
    \STATE Initialize $t=0$ and $\beta^t = \bf{0}$.
    \WHILE {$\lambda_t \geq \lambda_{\infty},\ $}
    \STATE ${\beta}^{t+1} = \mathcal{T}_{\lambda_t,s,s_0}\left({\beta}^{t} + \frac{1}{n}X^\top (Y-X{\beta}^{t})\right)$.
    \STATE $\lambda_{t+1} = \sqrt{\kappa} \lambda_{t}$.
    \STATE $t = t+1$.
    \ENDWHILE
    \ENSURE $\hat \beta = \beta^{t-1}$.
  \end{algorithmic}
\end{algorithm}
In Algorithm \ref{alg:iht}, the contraction factor $0<\kappa<1$ serves as a step size, ensuring that the tuning sequence ${\lambda_t}$ decreases exponentially.
$\lambda_0$ is chosen as a suitably large value to ensure the sparsity of the estimator.
$\lambda_{\infty}$ denotes a threshold used to terminate the iterations of Algorithm \ref{alg:iht}.
A reasonable choice for $\lambda_{\infty}$ is approximately of the same magnitude as $\sqrt{\frac{\sigma^2\left(\frac{1}{s_0}\log(\frac{em}{s})+\log(\frac{ed}{s_0})\right)}{n}}$, as stated in the following theorem.

Given the sequence of threshold $\{\lambda_t\}_{t=0}^\infty$, we obtain the corresponding solution path $\{\beta^t\}_{t=1}^\infty$.
To conduct the theoretical analysis, we decompose the estimation error at each step into two parts:
\begin{align}\label{eq:H}
  \begin{split}
  H^{t+1} =&{\beta}^{t} + \frac{1}{n}X^\top (Y-X{\beta}^t)\\
   =& \beta^*+ \underbrace{(\frac{1}{n}X^\top X-\mathbf{I}_p)(\beta^* - {\beta}^t)}_{\text {Approximation error }}+ \underbrace{\frac{1}{n}X^\top \xi}_{\text {Model error }}.
  \end{split}
  \end{align}
To simplify the notations, let $\Phi = \frac{1}{n}X^\top X-\mathbf{I}_p$ and
$
\Xi = \frac{1}{n}X^\top \xi.
$
In what follows, we control the two sources of error, i.e., approximation error and model error, respectively in each iteration.

We first provide the Double Sparse Restricted Isometry Property ($\mbox{DSRIP}$) condition. 

\begin{definition}[DSRIP]\label{df2}
  Given integers $s \in [m]$ and $s_0\in [d]$, define $L_S$ and $U_S$ such that
  $$
  U_S = \max_{S\in\mathcal{S}^{m,d}(s,s_0)}\lambda_{\max}(X_S^\top X_S),
  $$
  $$
  \quad L_S = \min_{S\in\mathcal{S}^{m,d}(s,s_0)}\lambda_{\min}(X_S^\top X_S).
  $$
  Set $\delta_S = 1-\frac{L_S}{U_S}$.
  We say that the design matrix $X$ satisfies $\mbox{DSRIP}(s,s_0,c),0<c<1$ if
  $
  \delta_S \le c.
  $
\end{definition}

The traditional RIP condition is a valuable tool for conducting high-dimensional statistical analysis.
DSRIP extends the RIP condition to the double sparse structure, which is a less strict condition compared to RIP when considering double sparsity. Specifically, DRSIP requires nontrivial bounds on the sub-matrix $X^\top_S X_S$, where the sub-matrix is indexed by a $(s, s_0)$-shape set $S$.
In contrast, RIP imposes a stronger assumption, where the set $S$ consists of all index sets with $ss_0$ elements.

\begin{lemma}\label{lem:dsrip}
Assume that the design matrix $X$ satisfies DSRIP$(2s, 2s_0, \frac{\delta}{2})$. For any $S \in \mathcal{S}^{m, d}(2s, 2s_0)$, the operator norm of $\Phi_{SS}$ satisfies that $\|\Phi_{SS}\|_{op} \leq \delta$, where $\Phi_{SS}$ is the sub-matrix of $\Phi$ whose rows and columns are both listed in $S$.
\end{lemma}

Lemma \ref{lem:dsrip} demonstrates that $\Phi$ acts as a contraction factor for any $(2s, 2s_0)$-sparse vector.
Consequently, the DSRIP condition can compress the approximation error during the iteration procedure.
To control the model error, it is necessary to use the following lemma to capture the model complexity.
\begin{lemma}\label{lemma:iht1}
  Assume that $X$ satisfies DSRIP$(s, s_0, \frac{\delta}{2})$.
  For a constant $C>0$, the event
  \begin{align*}
    \mathcal{E} \coloneqq \left\{\forall S\in \mathcal{S}^{m,d}(s,s_0),  \sum_{i\in S}\Xi_{i}^2 \le \frac{10\sigma^2s\left(\log(\frac{em}{s})+s_0\log(\frac{ed}{s_0})\right)}{n} \right\}
  \end{align*}
  holds with probability greater than $1-\exp\left\{-C\left(s\log(\frac{em}{s})+ss_0\log(\frac{ed}{s_0})\right)\right\}$.
\end{lemma}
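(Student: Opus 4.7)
The plan is to reduce $\sum_{(i,j)\in S}\Xi_{ij}^{2}$ to a sub-Gaussian quadratic form for each fixed double-sparse support $S$, obtain a sharp per-$S$ tail bound, and then absorb a union bound over $\mathcal{S}^{m,d}(s,s_{0})$ into the exponent. Writing $X_S \in \mathbb{R}^{n\times |S|}$ for the submatrix of $X$ whose columns are indexed by the pairs in $S$, one has the identity
\[
\sum_{(i,j)\in S}\Xi_{ij}^{2}\;=\;\Bigl\|\tfrac{1}{n}X_S^{\top}\xi\Bigr\|_{2}^{2}\;=\;\xi^{\top}M_S\,\xi, \qquad M_S := \tfrac{1}{n^{2}}X_S X_S^{\top}.
\]
The column normalization $\|X_{\cdot,ij}\|_{2}=\sqrt{n}$ gives $\operatorname{tr}(M_S)=|S|/n\le s s_{0}/n$, so the mean satisfies $\mathbb{E}[\xi^{\top}M_S\xi]\le \sigma^{2} s s_{0}/n$.

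For the deviation above the mean I would invoke the Hanson--Wright inequality,
\[
\Pr\Bigl(\xi^{\top}M_S\xi-\mathbb{E}[\xi^{\top}M_S\xi]>t\Bigr)\;\le\;\exp\Bigl(-c\min\Bigl\{\tfrac{t^{2}}{\sigma^{4}\|M_S\|_{F}^{2}},\ \tfrac{t}{\sigma^{2}\|M_S\|_{\mathrm{op}}}\Bigr\}\Bigr).
\]
The DSRIP hypothesis in Definition \ref{df2} delivers $\|X_S^{\top}X_S\|_{\mathrm{op}}\le U_S=O(n)$ uniformly in $S\in\mathcal{S}^{m,d}(s,s_{0})$, so $\|M_S\|_{\mathrm{op}}=O(1/n)$ and $\|M_S\|_{F}^{2}\le|S|\,\|M_S\|_{\mathrm{op}}^{2}=O(s s_{0}/n^{2})$. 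Choosing $t=C_{0}\sigma^{2}\bigl(s\log(em/s)+s s_{0}\log(ed/s_{0})\bigr)/n$ with a large constant $C_{0}$ activates the linear branch, since the bracketed quantity dominates $s s_{0}$ in the high-dimensional regime, giving a per-$S$ tail bound of size $\exp\bigl(-c'(s\log(em/s)+s s_{0}\log(ed/s_{0}))\bigr)$.

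It remains to take a union bound over $\mathcal{S}^{m,d}(s,s_{0})$. Each support is specified by selecting $s$ active groups out of $m$ and at most $s_{0}$ active coordinates within each of them, so
\[
\log\bigl|\mathcal{S}^{m,d}(s,s_{0})\bigr|\;\le\;\log\binom{m}{s}+s\log\binom{d}{s_{0}}\;\le\;s\log(em/s)+s s_{0}\log(ed/s_{0}),
\]
which is of exactly the same order as the per-$S$ exponent produced by Hanson--Wright. Enlarging $C_{0}$ so that the per-$S$ exponent dominates the log-cardinality by any prescribed factor $C$ yields the advertised probability, and absorbing the small mean $\sigma^{2}s s_{0}/n\le t$ into the bound produces the factor $10$ in the statement. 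The identity $\log(ep/s)=\log(em/s)+\log d$ together with $s\log d\le s s_{0}\log(ed/s_{0})$ lets us pass freely between the two equivalent forms of the bound.

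The principal obstacle is extracting $\|M_S\|_{\mathrm{op}}=O(1/n)$ uniformly in $S$ from DSRIP rather than from the trivial estimate $\|X_S\|_{F}^{2}=n|S|$: the naive bound $\|M_S\|_{\mathrm{op}}\le|S|/n$ would inflate the Bernstein branch of Hanson--Wright by a factor of $|S|=\Theta(s s_{0})$ and render the union bound useless. Once the spectral input is in hand, balancing the two branches is routine and reduces to checking $s\log(em/s)+s s_{0}\log(ed/s_{0})\ge s s_{0}$, which holds in the operative regime $d\ge s_{0}$.
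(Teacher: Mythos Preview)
Your proposal is correct and follows the same two-step architecture as the paper: a per-$S$ concentration bound on the quadratic form $\xi^\top M_S\xi$, followed by a union bound over $|\mathcal{S}^{m,d}(s,s_0)|\le\binom{m}{s}\binom{d}{s_0}^s$. The only difference is that the paper outsources the first step to Lemma~\ref{lem5} (quoted from \cite{ndaoud2020scaled}), whereas you derive it directly from Hanson--Wright; your explicit appeal to DSRIP to secure $\|M_S\|_{\mathrm{op}}=O(1/n)$ makes transparent an assumption that is implicit in the paper's citation of Lemma~\ref{lem5} but absent from the stated hypotheses of Lemma~\ref{lemma:iht1} itself.
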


The good event $\mathcal{E}$ triggers the subsequent theoretical analysis. By satisfying $\mathcal{E}$, the operator $\mathcal{T}_{\lambda, s, s_0}$ gains control over the false positive variable within certain $(s, s_0)$-shaped sets.
To illustrate the role of the operator $\mathcal{T}_{\lambda, s, s_0}$, we consider model \eqref{e12} with $\beta^*=0$, which simplifies to a white noise model. We introduce a threshold $\lambda=\sigma\sqrt{\frac{10(\frac{1}{s_0}\log(\frac{em}{s})+\log(\frac{ed}{s_0}))}{n}}$ and aim to prove by contradiction that the selected set can be confined within a $(s, s_0)$-shape set.

To begin, we reshape the $p$-length vector into a $d \times m$ matrix and assume that the signal of each column decreases in absolute magnitude. Then, we apply the operator $\mathcal{T}_{\lambda,s_0}$ to obtain the selected set. We can analyze the properties of this operator by examining three distinct cases:

\begin{itemize}
\item[\textbf{Case 1:}] Assume that the set selected by $\mathcal{T}_{\lambda,s_0}$ covers no more than $s$ groups, but the number of entries in some groups exceeds $s_0$, as shown in Figure \ref{fig1}. The inner-group condition in Step 2 guarantees that the magnitude of the first $s$ rows is larger than $s_0 \lambda^2$. Consequently, the total magnitude in the red region is greater than $ss_0\lambda^2$, which contradicts the event $\mathcal{E}$ with high probability.

\item[\textbf{Case 2:}] Assume that the set selected by $\mathcal{T}_{\lambda,s_0}$ covers more than $s$ groups, but the number of entries in all groups is less than $s_0$, as shown in Figure \ref{fig2}. By the outer-group condition in Step 2, the magnitude of each selected column is larger than $s \lambda^2$. Similar to \textbf{Case 1}, the total magnitude in the red region exceeds $ss_0\lambda^2$, which contradicts the event $\mathcal{E}$ with high probability.

\item[\textbf{Case 3:}] Assume that the set selected by $\mathcal{T}_{\lambda,s_0}$ covers more than $s$ groups and more than $s_0$ non-zero entries in some groups. In Figure \ref{fig3}, considering Step 1 of the operator $\mathcal{T}_{\lambda,s_0}$, we guarantee that each entry in the selected set has a magnitude greater than $\lambda$. Consequently, in the groups ${G_1, G_2, G_3}$, the selected set located in the red region has $s_0$ non-zero entries in each group, implying that $G_1, G_2$, and $G_3$ have magnitudes greater than $s_0\lambda^2$ in the red region. Moreover, for $G_4$ and $G_5$, the magnitudes exceed $s_0 \lambda^2$ according to the outer-group condition in Step 2. By combining these two parts, we find that the total magnitude of the red region exceeds $ss_0\lambda^2$. This contradicts the event $\mathcal{E}$ with high probability.
\end{itemize}

By examining these cases, we have demonstrated that the selected set can be confined within a $(s, s_0)$-shape set in a white noise model, supporting the effectiveness of the operator $\mathcal{T}_{\lambda,s_0}$ in identifying the desired signal structure.
Overall, we have controlled the approximation error and model error, respectively, based on DSRIP condition and event $\mathcal{E}$.

\begin{figure}
  \begin{center}
    \begin{tikzpicture}[scale = 0.8]
      \draw[color=red!40,
      pattern={mylines[size= 5pt,line width=.8pt,angle=45]},
      pattern color=red] (0,2) rectangle (5,6);
      \draw[color=blue!40,
      pattern={mylines[size= 5pt,line width=.8pt,angle=-45]},
      pattern color=blue] (0,5) rectangle (4,6);
  \draw[color=blue!40,
      pattern={mylines[size= 5pt,line width=.8pt,angle=-45]},
      pattern color=blue] (0,1) rectangle (3,6);
      \draw[color=blue!40,
      pattern={mylines[size= 5pt,line width=.8pt,angle=-45]},
      pattern color=blue] (0,0) rectangle (2,1);
    \node[] at (2.5,6.7) {$s$};
    % \node[] at (-0.5,4.5) {$s_0$};
    \node[] at (0.5,-0.5) {$G_1$};
    \node[] at (1.5,-0.5) {$G_2$};
    \node[] at (2.5,-0.5) {$G_3$};
    \node[] at (3.5,-0.5) {$G_4$};
    \node[] at (4.5,-0.5) {$G_5$};
    \node[] at (5.5,-0.5) {$G_6$};
    \node[] at (6.5,-0.5) {$G_7$};
    \node[] at (7.5,-0.5) {$G_8$};
    \node[] at (8.5,-0.5) {$G_9$};
    \node[] at (9.5,-0.5) {$G_{10}$};
    \draw[step=1,color=gray] (0,0) grid (10,6);
    \draw [thick, decorate, 
		decoration = {calligraphic brace, % 这种比较好看，跟underbrace形状差不多
			raise=5pt, % 离path的距离，感觉没什么用
			aspect=0.5, % 控制尖端的位置
			amplitude=4pt % 尖端的凸起高度
		}] (0,6) --  (5,6);
    \end{tikzpicture} 
    \end{center}
    \caption{Illustrative example of {\bf{case 1}}: Consider a scenario where we have 10 groups, each with an equal group size of $d=6$. To visualize the group structure, we reshape it into a $6\times 10$ matrix, where each column represents a group.
    In this example, we set $s=5$ and $s_0 = 4$. The blue region in the matrix represents the selected set, while the red region represents a $(s, s_0)$-shape subset with a total magnitude exceeding $ss_0 \lambda^2$.
    It's important to note that the entire vector of support is reshaped into a matrix, preserving the specific group structure.
  }\label{fig1}
  \end{figure}
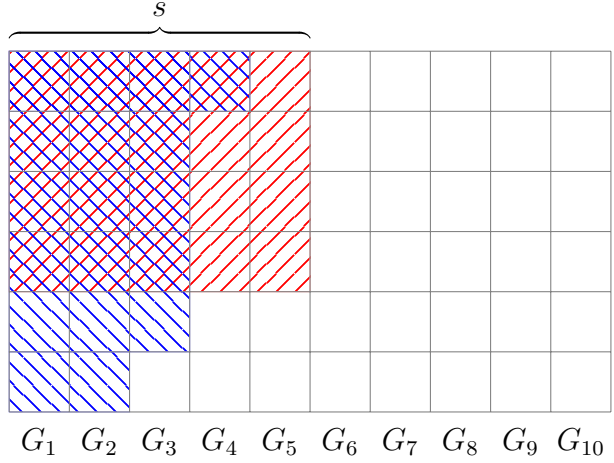

  \begin{figure}
    \begin{center}
      \begin{tikzpicture}[scale = 0.8]
        \draw[color=red!40,
        pattern={mylines[size= 5pt,line width=.8pt,angle=45]},
        pattern color=red] (0,2) rectangle (5,6);
        \draw[color=blue!40,
        pattern={mylines[size= 5pt,line width=.8pt,angle=-45]},
        pattern color=blue] (0,5) rectangle (9,6);
       \draw[color=blue!40,
        pattern={mylines[size= 5pt,line width=.8pt,angle=-45]},
        pattern color=blue] (0,4) rectangle (7,6);
        \draw[color=blue!40,
        pattern={mylines[size= 5pt,line width=.8pt,angle=-45]},
        pattern color=blue] (0,3) rectangle (3,6);
      \node[] at (2.5,6.7) {$s$};
      % \node[] at (-0.5,4.5) {$s_0$};
      \node[] at (0.5,-0.5) {$G_1$};
      \node[] at (1.5,-0.5) {$G_2$};
      \node[] at (2.5,-0.5) {$G_3$};
      \node[] at (3.5,-0.5) {$G_4$};
      \node[] at (4.5,-0.5) {$G_5$};
      \node[] at (5.5,-0.5) {$G_6$};
      \node[] at (6.5,-0.5) {$G_7$};
      \node[] at (7.5,-0.5) {$G_8$};
      \node[] at (8.5,-0.5) {$G_9$};
      \node[] at (9.5,-0.5) {$G_{10}$};
      \draw[step=1,color=gray] (0,0) grid (10,6);
      \draw [thick, decorate, 
      decoration = {calligraphic brace, % 这种比较好看，跟underbrace形状差不多
        raise=5pt, % 离path的距离，感觉没什么用
        aspect=0.5, % 控制尖端的位置
        amplitude=4pt % 尖端的凸起高度
      }] (0,6) --  (5,6);
      \end{tikzpicture} 
      \end{center}
      \caption{Illustrative example of {\bf{case 2}}.
      The elements in Figure \ref{fig2} are the same as  in Figure \ref{fig1}.
      }\label{fig2}
    \end{figure}

    \begin{figure}
      \begin{center}
        \begin{tikzpicture}[scale = 0.8]
          \draw[color=red!40,
          pattern={mylines[size= 5pt,line width=.8pt,angle=45]},
          pattern color=red] (0,2) rectangle (5,6);
          \draw[color=blue!40,
          pattern={mylines[size= 5pt,line width=.8pt,angle=-45]},
          pattern color=blue] (0,5) rectangle (9,6);
         \draw[color=blue!40,
          pattern={mylines[size= 5pt,line width=.8pt,angle=-45]},
          pattern color=blue] (0,4) rectangle (7,6);
          \draw[color=blue!40,
          pattern={mylines[size= 5pt,line width=.8pt,angle=-45]},
          pattern color=blue] (0,1) rectangle (3,6);
        \node[] at (2.5,6.7) {$s$};
        % \node[] at (-0.5,4.5) {$s_0$};
        \node[] at (0.5,-0.5) {$G_1$};
        \node[] at (1.5,-0.5) {$G_2$};
        \node[] at (2.5,-0.5) {$G_3$};
        \node[] at (3.5,-0.5) {$G_4$};
        \node[] at (4.5,-0.5) {$G_5$};
        \node[] at (5.5,-0.5) {$G_6$};
        \node[] at (6.5,-0.5) {$G_7$};
        \node[] at (7.5,-0.5) {$G_8$};
        \node[] at (8.5,-0.5) {$G_9$};
        \node[] at (9.5,-0.5) {$G_{10}$};
        \draw[step=1,color=gray] (0,0) grid (10,6);
        \draw [thick, decorate, 
        decoration = {calligraphic brace, % 这种比较好看，跟underbrace形状差不多
          raise=5pt, % 离path的距离，感觉没什么用
          aspect=0.5, % 控制尖端的位置
          amplitude=4pt % 尖端的凸起高度
        }] (0,6) --  (5,6);
        \end{tikzpicture} 
        \end{center}
        \caption{Illustrative example of {\bf{case 3}}.
        The elements in Figure \ref{fig3} are the same as  in Figure \ref{fig1}.
        }\label{fig3}
      \end{figure}
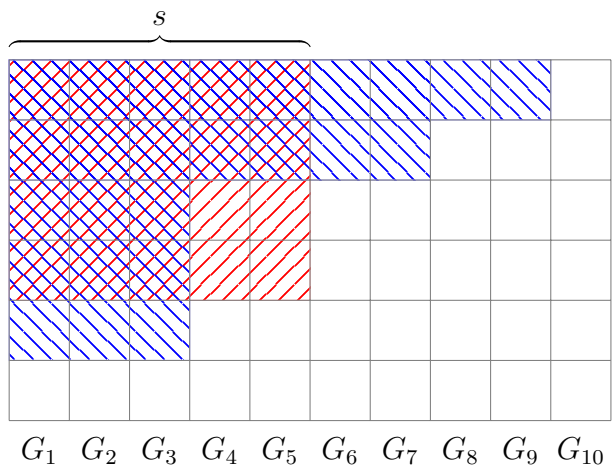

Based on Lemma \ref{lemma:iht1}, the following theorem shows that the corresponding estimator is $(2s, 2s_0)$-sparse.
In addition, the upper bound of estimation error decreases exponentially in each iteration.
\begin{theorem}\label{th3}
  Assume that $\beta^*$ is $(s,s_0)$-sparse and $X$ satisfies $\mbox{DSRIP}(2s, 2s_0,\delta/2)$. Let $\lambda_0>\lambda_{\infty}>0$ and $0<\kappa<1$. Assume that $\delta<\frac{1}{(3+\sqrt{2}+2\sqrt{3})^2}\wedge \kappa$, $\|\beta^* \|_2 \le \sqrt{ss_0}\lambda_0$ and 
  $$
  \lambda_{\infty} \ge \sqrt{\frac{40\sigma^2\left(\frac{1}{s_0}\log(\frac{em}{s})+\log(\frac{ed}{s_0})\right)}{n}}.
  $$
  We apply Algorithm \ref{alg:iht} and obtain a solution sequence $\{\beta^{t}\},t=1,2,\cdots$.
  Denote $S^t$ as the support of $\beta^t$.
 % If the event $\mathcal{E}$ holds, 
 Then, with probability at least  $1-\exp\left\{-C\left(s\log(\frac{em}{s})+ss_0\log(\frac{ed}{s_0})\right)\right\}$, we have the following properties:
 \begin{itemize}
  \item Inside the true groups $G^*$, we have
 \begin{equation}\label{eq:iht11}
  (\mathop{\cup}_{j \in G^*} G_j) \cap S^t \cap (S^*)^c  \in \mathcal{S}^{m,d}(s,s_0).
\end{equation} 
\item Outside the true groups $G^*$, we have
\begin{equation}\label{eq:iht3}
  (\mathop{\cup}_{j \in G^*} G_j)^c \cap S^t   \in \mathcal{S}^{m,d}(s,s_0).
  \end{equation}
\item The upper bounds for the estimation error are
\begin{equation}\label{eq:iht4}
    \|\beta^t - \beta^* \|_2 \le \frac{3+\sqrt{2}+2\sqrt{3}}{2}\sqrt{ss_0}\lambda_t. 
  \end{equation}
\end{itemize}
\end{theorem}

Equation \eqref{eq:iht3} reveals that the false positive variables within the true groups $G^*$ can be effectively controlled within a $(s, s_0)$-shaped set. Additionally, \eqref{eq:iht3} demonstrates that our procedure can identify at most $s$ incorrect groups, while still maintaining control over the false positive variables outside the true groups within a $(s, s_0)$-shaped set.
These two results imply that the solution sequence ${\beta^t}$ remains $(2s, 2s_0)$-sparse in each iteration, highlighting the superior ability of our procedure to guarantee sparsity and minimize false positives. 

However, due to the nonconvex feature of IHT-style methods, the error $\|\beta^t - \beta^* \|_2$ cannot be guaranteed to decrease at each step.
To overcome this limitation, a common approach is to construct an upper bound that decreases at each step \citep{zhangtong2018, Zhu202014241}. In Algorithm \ref{alg:iht}, the sequence ${\lambda_t}$ exponentially decreases until it reaches the threshold $\lambda_{\infty}$. By choosing an appropriate value for $\lambda_{\infty}$, the upper bound given by \eqref{eq:iht4} achieves optimality in the minimax sense.
\begin{remark}
 Under the assumptions of Theorem \ref{th3}, Algorithm \ref{alg:iht} terminates after $O\left(\log(\frac{n\|\beta^*\|^2_2}{\sigma^2(s\log\frac{em}{s}+ss_0\log\frac{ed}{s_0})})/\log(\frac{1}{\kappa})\right)$ steps, which demonstrates that our method achieves optimal statistical accuracy with linear convergence.
\end{remark}

 \begin{remark}
  Assumption \ref{ass1} imposes a slightly weaker constraint on $X$ compared to DSRIP, as the latter requires all sub-matrices $X_{S}$ with $S \in \mathcal{S}^{m,d}(2s, 2s_0)$ to be near-isometries, meaning $\delta$ is close to one.
  However, the DSRIP condition is necessary for our theoretical analysis. We clarify this phenomenon in two aspects:
  \begin{itemize}
  \item The estimation method to obtain the minimax optimal estimator in Corollary \ref{cor:1} is NP-hard. However, we can find the optimal estimator as shown in Theorem \ref{th3} in polynomial time, more specifically, in linear time with high probability.
  In order to achieve these results, the analysis of our proposed IHT-style procedure requires a more stringent condition than Assumption \ref{ass1}.
  Similar constraints on $\delta$ for the analysis of the IHT procedure have been obtained in previous works \citep{huang2018constructive,zhangtong2018,Zhu202014241,zhang2023minimax}.
  \item Although the upper bound $\frac{1}{(3+\sqrt{2}+2\sqrt{3})^2}$ is much smaller than 1, it is not a necessary condition. Through our theoretical analysis, we can relax the constraint on the upper bound of $\delta$ by choosing a large constant factor in result \eqref{eq:iht4}.
  Many numerical results \cite{huang2018constructive,zhangtong2018,Zhu202014241,zhang2023minimax} demonstrate the promising performance of the IHT procedure in practice, even when the condition on $X$ is strict. 
  \end{itemize}
  \end{remark}

\subsection{Numerical experiments}\label{numerical}
In this section, we explore the numerical performance of our proposed DSIHT algorithm. We compare it with the traditional IHT algorithm \cite{ndaoud2020scaled} as well as its variant for group selection, which we refer to as IHT and GIHT, respectively.

The synthetic data sets are generated from the underlying model $y = X \beta^*+\xi$, where $\beta^* \in \mathbb{R}^{md}$ has $m$ groups with equal group size $d$.
We randomly draw design matrix $X \in \mathbb{R}^{n \times md}$ with i.i.d. standard normal entries.
The non-zero entries of $\beta^*$ are randomly chosen from standard normal distribution $\mathcal{N}(0, 1)$.
For the error term, $\xi_i$ is generated independently from $\mathcal{N}(0, \sigma^2)$.

Given an output $(\hat S, \hat \beta)$, we use the following metrics to assess the parameter estimation and variable selection:
\begin{itemize}
  \item \textbf{Estimation Error}: $\|\hat \beta - \beta^*\|_2$.
  \item \textbf{Mathew's Correlation Coefficient (MCC)}:
  \begin{align*}
    \text{MCC} = \frac{\text{TP}\times \text{TN}-\text{FP}\times \text{FN}}{\sqrt{(\text{TP+FP)(TP+FN)(TN+FP)(TN+FN)}}},
  \end{align*}
  where true positives (TP) and true negatives (TN) are defined as $\hat S \cap S^*$ and $\hat S^c \cap (S^*)^c$, respectively,
  and false positives (FP) and false negatives (FN) are defined as $\hat S \cap (S^*)^c$ and $\hat S^c \cap S^*$, respectively.
  Notably, a larger MCC means a better performance on variable selection.
\end{itemize}

We consider the following two simulation designs: 
\begin{itemize}
  \item [(1)] sample size $n$ varies from $300$ to $1000$ with increment equal to 50. The remaining model parameters are set as $m=50, d = 20, s = 2, s_0=10, \sigma=1$.
  \item [(2)] noise level $\sigma$ varies from $1$ to $5$ with increment equal to 0.5. We set $n=300$ and the remaining model parameters are the same as case (1).
\end{itemize}
All simulation results are based on 100 repetitions. 
Since GIHT allows all the variables in the selected groups into model, leading a much smaller MCC than DSIHT and IHT, we exclude the performance of GIHT on MCC.
The computational results are shown in Figure \ref{fig_n} and \ref{fig_sigma}.

\begin{figure}[htbp]
  \centering
  \includegraphics[scale = 0.58]{./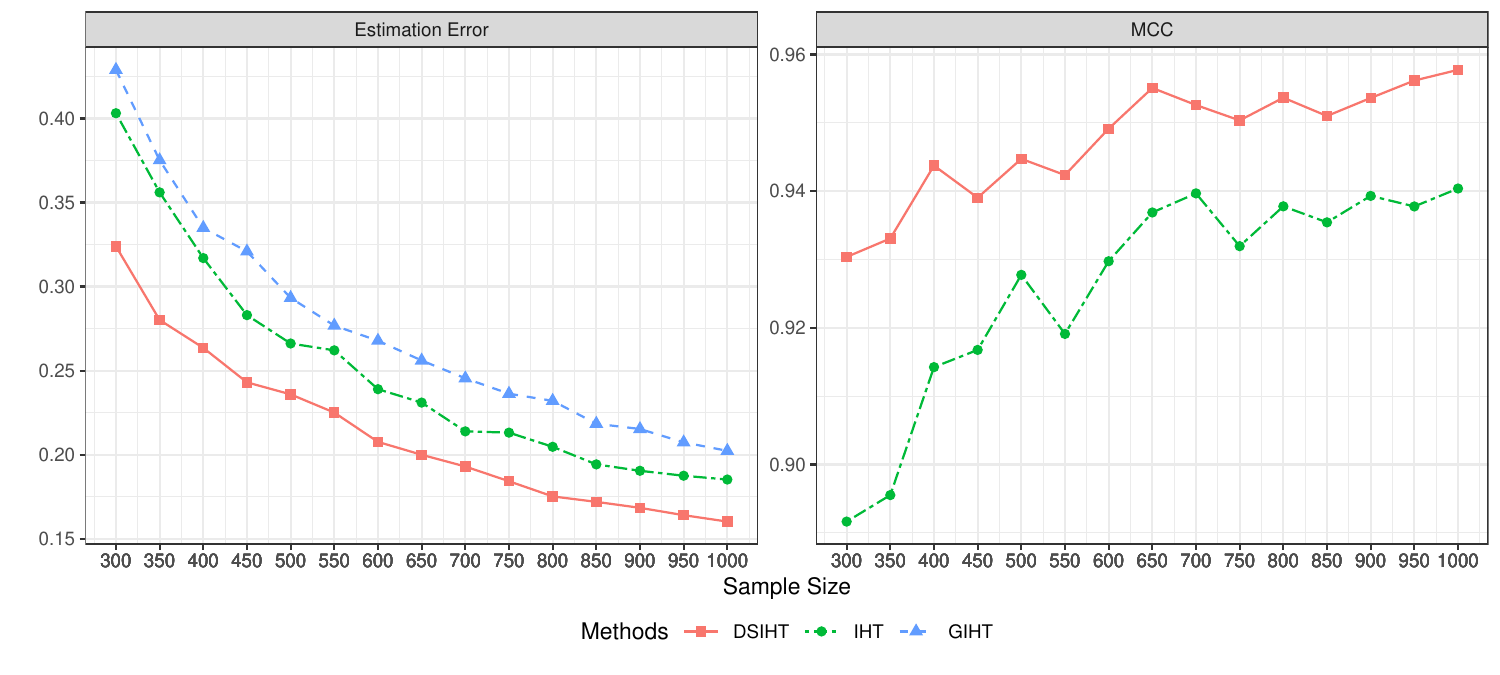}
  \caption{\label{fig_n} Performance measures as sample size $n$ increases.}
\end{figure}
As the sample size $n$ increases, it is evident from Figure \ref{fig_n} that all three methods demonstrate strong performance. However, it is worth highlighting that DSIHT exhibits a clear advantage over IHT and GIHT in terms of estimation error. This superiority of our method is particularly pronounced when dealing with a double sparse structure.
Furthermore, when it comes to variable selection, DSIHT significantly outperforms IHT. This outcome further solidifies the advantages of our proposed approach.

\begin{figure}[htbp]
  \centering
  \includegraphics[scale = 0.58]{./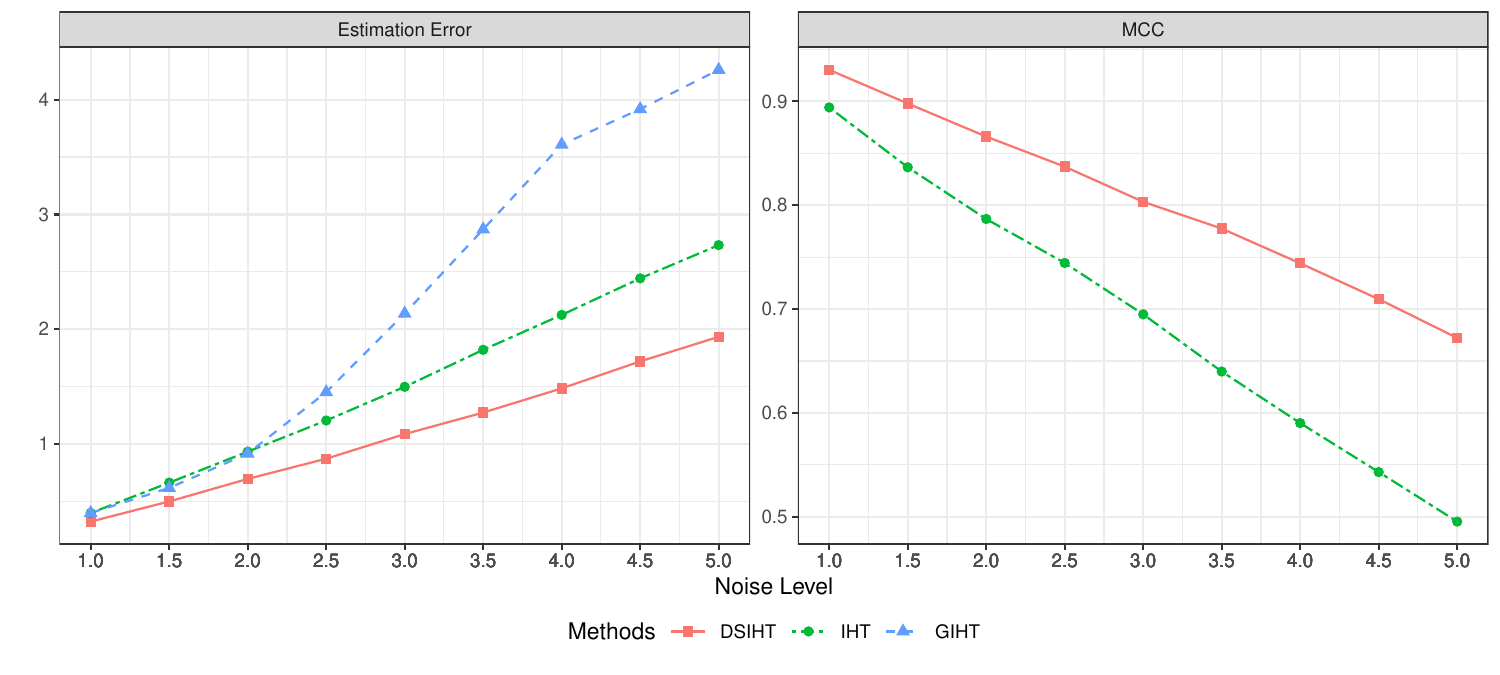}
  \caption{\label{fig_sigma} Performance measures as noise level $\sigma$ increases.}
\end{figure}

Figure \ref{fig_sigma} exhibits a similar trend as observed in Figure \ref{fig_n}. As the noise level $\sigma$ increases, the signal-to-noise ratio of the model diminishes, making it more challenging to accurately reconstruct the signal from the data.
At lower levels of $\sigma$, all three methods display comparable performance in terms of estimation error. However, as $\sigma$ increases, the differences in performance become significant, particularly for GIHT. The estimation error of GIHT shows an exponential growth as $\sigma$ increases. In contrast, both DSIHT and IHT demonstrate linear growth, but the slope of IHT is noticeably steeper than that of DSIHT.

Moreover, as $\sigma$ increases, the performance of both DSIHT and IHT deteriorates in terms of variable selection. However, DSIHT manages to maintain competitive or superior performance on variable selection even under high noise levels. This underscores the effectiveness and robustness of our method when dealing with double sparsity.

\section{Key Points of Proofs of Main Theorems}\label{sec4}
In this section, we briefly provide high-level proof sketches of the main technical results.
The detailed proofs are postponed to the appendix. 
\subsection{Proof Sketch of Theorem \ref{th1}}
The proofs of lower bounds generally follow an
information-theoretic method based on Fano's inequality \cite{thomas2006elements,yang1999information,yu1997assouad}. 
The technical details are slightly different between cases $q=0$ and $q \in (0,1]$.
Concretely speaking, for $q=0$, the proof is divided into two main steps:
\begin{itemize}
  \item [(1)] First, we construct the maximum packing set of the parameter space $\Theta^{m,d}_0(s, s_0)$ by coding theory \citep{thomas2006elements}. Our novel technique uses the combination of multi-ary and 2-ary Gilbert-Vashamov bound (Lemma \ref{lem1} in appendix) to establish the bounds of the packing number $M$ of $\Theta^{m,d}_0(s, s_0)$.  
  For two distinct elements $\theta^i, \theta^j$ in the packing set $\{\theta^1,\theta^2,\cdots,\theta^M\}$, the supports of $\theta^i$ and $\theta^j$ either share the same column index set or not. 
  We give more details about the construction of the packing set in Lemma \ref{lem2} in the appendix.
  \item [(2)] The next step is to derive a lower bound on $P(\psi \ne \tilde{\beta})$ by the Fano's inequality \citep{thomas2006elements}. We obtain
  \begin{equation}\label{fano}
    P(\psi \ne \tilde{\beta}) \ge 1-\frac{I(\bar Y;\psi)+\log 2}{\log M},
  \end{equation}
  where the estimator $\tilde{\beta}$ takes values in the packing set, $M$ is the packing number obtained in the first step, and $I(\bar Y;\psi)$ is the mutual information between random parameter $\psi$ in the packing set and observation $\bar Y$. We derive the upper bound of mutual information following the classical way in \cite{thomas2006elements}.  
\end{itemize}
On the other hand, for $q \in (0,1]$, \cite{raskutti2011minimax} derives the lower bound for $\ell_q$-ball by Yang-Barron Fano \citep{yang1999information}. In their framework, a sharp result of metric entropy of $\ell_q$-ball \cite{triebel2010fractals,kuhn2001lower} plays an essential role. 
In our work, we do not follow this idea because our metric entropy consists of an $\ell_0$ part and an $\ell_q$ part, which are incompatible in some sense. In other words, it is difficult to choose a pair of appropriate
covering and packing radii that matches the Yang-Barron Fano simultaneously. We use the following alternative techniques to overcome this difficulty:
\begin{itemize}
  \item [(1)] Inspired by \cite{gao2015rate}, when the lower bound consists of two parts, we can construct two corresponding parameter subspaces, respectively. Each of the parameter subspaces aims to specify the corresponding risk. Then, we combine these two parts by union bound.
  \item [(2)] To clarify the parameter subspace corresponding to the $\ell_q$ part of the risk, direct application of the metric entropy results is not straightforward. Inspired by the proofs in \cite{kuhn2001lower}, we use the techniques for lower bound of (dyadic) entropy number to construct our least favor distribution here, which is a substitution to techniques in \cite{raskutti2011minimax}.
\end{itemize}
\subsection{Proof Sketch of Theorem \ref{th2}}
  The proof of Theorem \ref{th2} mainly focuses on the direct analysis of the constrained least-squares estimators in \eqref{e7}.
It makes use of the classical upper bound technique for high-dimensional linear regression \cite{raskutti2011minimax}, which can be summarized as two main steps:
\begin{itemize}
  \item [(1)] From the definition of \eqref{e7}, we have $\|\bar Y-\hat \theta_q\|_F^2 \leq \|\bar Y - \theta^*\|_F^2.$
  By some simple algebras, we obtain the basic inequality
  \begin{equation}\label{e11}
    \|\hat\theta_q - \theta^*\|_F^2 \leq 2 |\langle \bar Z, \hat\theta_q-\theta^*\rangle| = \mbox{trace}\left(\bar Z^\top (\hat\theta_q-\theta^*)\right).
  \end{equation}
  Obviously, $\hat\theta_q - \theta^* \in  \Theta^{m,d}_q(2s, 2R_q)$.
  \item [(2)] This step uses the chaining and peeling techniques from empirical process theory \cite{geer2000empirical}, which helps us upper bound the right-hand side of \eqref{e11}.
      Moreover, some results of the covering number are required to derive the inequalities for the chaining results.
  More details can be found in Lemma \ref{lem2} in the appendix.
\end{itemize}

\subsection{Proof Sketch of Theorem \ref{th4}}\label{t4}

In the proof sketch, we outline the main steps of establishing the lower and upper bounds for the double sparse regression over $\ell_u^m(\ell_q^d)$ structure:

For the lower bound, we extend our techniques from the $\ell_0(\ell_0)$ structure to the $\ell_u(\ell_q)$ structure. The key challenge lies in dealing with the phase transition rates, as the relationships between $u$ and $q$, $m$ and $d$ are unknown.
\begin{itemize}
  \item  We transform the original problem into a linear programming formulation. For the $\ell_0(\ell_0)$ structure, where sparsity $(s,s_0)$ is known, we apply Fano's inequality to derive the lower bound. In the case of $\ell_u(\ell_q)$ structure, we choose $(s,s_0)$ appropriately later on.
  \item The minimax lower bound we seek depends on $(s,s_0)$, which is subject to constraints imposed by embedding properties and the high-dimensional condition \eqref{lqqcondition}. We maximize our objective function within the restricted domain to obtain lower bound rates in three scenarios, considering the relationships between $u$ and $q$, $m$ and $d$.
\end{itemize}

For the upper bound, the approach for obtaining the upper bound is similar to Theorem \ref{th2}. We utilize a generalized version of Sch{\"u}tt’s theorem for vector-valued spaces, as shown in \cite{edmunds2014schutt}. This allows us to calculate the covering number of the $\ell_u(\ell_q)$-norm unit ball.
The remaining steps of the proof involve combining the "chaining" and "peeling" techniques to establish the upper bound, leveraging the obtained covering number.

By following these steps, we establish both the lower and upper bounds for the double sparse regression over the $\ell_u(\ell_q)$ structure, providing insights into the phase transition rates and the optimality of our approach.
\subsection{Proof Sketch of Theorem \ref{th3}}\label{t3}

The proof of Theorem \ref{th3} relies on mathematical induction. We assume that \eqref{eq:iht11}, \eqref{eq:iht3}, and \eqref{eq:iht4} hold for $t$. Our goal is to show that \eqref{eq:iht11}, \eqref{eq:iht3}, and \eqref{eq:iht4} also hold for $t+1$.

We begin by proving \eqref{eq:iht11} and \eqref{eq:iht3} for $t+1$ through a contradiction argument. 
Suppose that \eqref{eq:iht11} fails for $t+1$, which means that $(\mathop{\cup}_{j \in G^*} G_j) \cap S^t \cap (S^*)^c \notin \mathcal{S}^{m,d}(s,s_0)$. 
In this case, there exists a $(s, s_0)$-shape set $S_{1, t+1}$ satisfying
 $$
  ss_0 \lambda_{t+1}^2 \le \sum_{i \in S_{1, t+1}}\{\mathcal{T}_{\lambda_{t+1}}(H^{t+1})\}^2_i
  $$
  as discussed in {\bf{Case 1}}. Similarly, if \eqref{eq:iht3} fails for $t+1$, implying that $(\mathop{\cup}_{j \in G^*} G_j)^c \cap S^t \notin \mathcal{S}^{m,d}(s,s_0)$, there exists a $(s, s_0)$-shape set $S_{2, t+1}$ satisfying
  $$
 ss_0 \lambda_{t+1}^2 \le \sum_{i \in S_{2, t+1}}\{\mathcal{T}_{\lambda_{t+1}}(H^{t+1})\}^2_i
 $$
 as discussed in {\bf{Case 1-3}}. Observe that for $j=1, 2$,
 \begin{align*}
  \sqrt{ss_0}\lambda_{t+1} \le \sqrt{\sum_{i \in S_{j, t+1}}\{\mathcal{T}_{\lambda_{t+1}}(H^{t+1})\}^2_i} \le \sqrt{\sum_{i \in S_{j, t+1}}\langle \Phi_{i}^\top , \beta^*-\beta^{t}\rangle^2} + \sqrt{\sum_{i\in S_{j, t+1}}\Xi_{i}^2}.
  \end{align*}
  By utilizing the DSRIP and Lemma \ref{lem1}, we can establish a contradiction, proving that \eqref{eq:iht11} and \eqref{eq:iht3} hold for $t+1$.
Given that \eqref{eq:iht11} and \eqref{eq:iht3} hold for $t+1$, we can directly demonstrate that \eqref{eq:iht4} holds for $t+1$ as well.
\section{Conclusion}\label{conclusion}

In this paper, we provide the comprehensive analysis of the statistical guarantees for the double sparse structure under hard and soft sparsity, specifically over $\ell_u(\ell_q)$-balls with $u,q \in [0,1]$. Our results fill an important gap in the understanding of the double sparse structure, as previous studies focused primarily on hard sparsity.

To establish the minimax lower bounds, we introduce a novel technique based on the Gilbert-Varshamov bounds, which enables concise and elegant proofs. This technique is not only applicable to the double sparse structure but also has broader implications for analyzing other related problems such as sparse additive models \cite{raskutti2012minimax, yuan2016minimax}. By combining this technique with Fano's inequality, we derive lower bounds for the estimation error.

On the other hand, we utilize the empirical process theory, specifically the chaining and peeling results, to obtain matching upper bounds. This demonstrates that the lower bounds we establish are tight. Interestingly, we discover a phase transition phenomenon in the minimax rates under double soft sparsity, revealing the intricate behavior of the double sparse structure.

Furthermore, we extend our results to high-dimensional double sparse linear regression. To effectively capture the double sparse structure, we develop a novel hard thresholding operator and propose the DSIHT algorithm. We demonstrate that our method achieves optimality in the minimax sense. However, we do not explore the fully adaptive version of our method in this paper, leaving it as a potential avenue for future research.

\appendix

\section{Appendix}
In the appendix, we provide additional lemmas and proofs that have been omitted from the main text.
To simplify the notations, we denote $C_1, C_2, C_3$ and other similar forms as some positive constants that can differ on different occurrences.  

\subsection{Technical Lemmas for lower bounds}
We first provide some Technical lemmas frequently used in the proof of our main theorems.

First, we present the useful results of Gilbert-Varshamov bound \cite{gilbert1952comparison}.
Denote $A_{Q}(n, d)$ as the maximum possible size of a $Q$-ary code $\mathcal{Q}$ with length $n$ and minimum Hamming distance $d$.
Then, we have
\begin{equation}\label{ap2}
  A_{Q}(n, d) \geq \frac{Q^{n}}{\sum_{j=0}^{d-1}{n \choose j}(Q-1)^{j}} .
\end{equation}
We use the 2-ary Gilbert-Varshamov bound on a Hamming distance sphere which is widely used in the high-dimensional problem. 

\begin{lemma}\label{lem1}
  Denote $S_k = \{x \in \{0, 1\}^m: \rho_H(x) = k\}$ as the Hamming ball with radius $k$. Then, the $\rho$-packing number of $S_k$ is 
  lower bounded as
  $$
  M(\rho;S_k, \|\cdot\|_H) \geq \frac{{m \choose k}}{\sum_{i=1}^{\rho}{m \choose i}}.
  $$
  Setting $\rho = C_1 k$, we obtain
  \begin{equation*}
    \log M(C_1 k;S_k, \|\cdot\|_H) \geq C_2 k\log \frac{em}{k}.
  \end{equation*}
\end{lemma}
The proof of Lemma \ref{lem1} can see \cite{wu2017lecture} for more details.
Based on Lemma \ref{lem1}, we obtain the following lemma.
% Next, we give some useful lemmas used frequently in the proofs of our main theorems.

\begin{lemma}[Lower bounds for the packing number]\label{lem2}
  The packing number of the parameter space $\Theta^{m,d}_0(s, s_0)$ with Hamming distance $ \frac{ss_0}{4}$ is lower bounded as
  \begin{equation*}
    M(\frac{ss_0}{4};\Theta^{m,d}_0(s, s_0), \|\cdot\|_H) \geq \exp(\frac{1}{4}s \log \frac{em}{s})\exp(\frac{s s_0}{4}\log \frac{ed}{s_0}).
  \end{equation*}
\end{lemma}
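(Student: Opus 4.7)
The plan is to build the packing in two layers, mirroring the Khatri-Rao product decomposition $\Theta^{m,d}_0(s,s_0)=\Gamma\odot\Lambda$, and to control each layer with a separate Gilbert-Varshamov bound. Since Hamming distance is insensitive to the values of the nonzero entries, I restrict throughout to binary matrices with the prescribed support structure. For the outer layer, I apply the binary Gilbert-Varshamov bound of Lemma \ref{lem1}\eqref{ap1} on the Hamming sphere $S_s\subset\{0,1\}^m$ of column-index patterns to obtain a subset $\Gamma^{*}\subseteq S_s$ with $|\Gamma^{*}|\geq\exp\bigl(c_1\,s\log(em/s)\bigr)$ such that any two distinct elements disagree in at least $c_1'\,s$ positions. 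For the inner layer, the same GV argument applied to the Hamming sphere of weight $s_0$ in $\{0,1\}^d$ produces a collection $D^{*}$ with $|D^{*}|\geq\exp\bigl(c_2\,s_0\log(ed/s_0)\bigr)$ whose pairwise Hamming separation is at least $c_2'\,s_0$.

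A naive construction that forms the packing as $\Gamma^{*}\times(D^{*})^s$ fails, because two elements sharing the same $\gamma$ but differing on only one active column would have Hamming distance merely $\Theta(s_0)$, far below the required $ss_0/4$. To fix this I treat the inner codebook as an alphabet: take $q=|D^{*}|$ and invoke the $q$-ary Gilbert-Varshamov bound \eqref{ap2} with block length $s$ to obtain a code $C\subseteq(D^{*})^s$ whose minimum column-Hamming distance is at least $c_3\,s$ and whose size satisfies $|C|\geq q^{s}/\sum_{j=0}^{c_3 s-1}\binom{s}{j}(q-1)^j\geq\exp\bigl((1-c_3')\,s\log|D^{*}|\bigr)$ for a constant $c_3'<1$. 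The final packing is $\mathcal{P}=\{(\gamma,\lambda):\gamma\in\Gamma^{*},\ \lambda\in C_\gamma\}$, where $C_\gamma$ places the $s$ codeword entries of an element of $C$ on the active columns indicated by $\gamma$ and leaves every other column equal to zero. Counting gives $|\mathcal{P}|\geq\exp\bigl(c_1\,s\log(em/s)+(1-c_3')\,c_2\,ss_0\log(ed/s_0)\bigr)$.

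The minimum distance is then verified by a two-case analysis. If two elements of $\mathcal{P}$ share the same $\gamma$, their difference lives on the active columns, where the corresponding codewords of $C$ disagree in at least $c_3 s$ coordinates; each such coordinate compares two distinct elements of $D^{*}$ and hence contributes at least $c_2'\,s_0$ to the Hamming distance, yielding total distance at least $c_3 c_2'\,ss_0$. If instead the two column-patterns $\gamma$ differ, their symmetric difference has size at least $c_1'\,s$, and in each symmetric-difference coordinate one matrix carries an $s_0$-sparse binary column while the other carries the zero column, contributing exactly $s_0$ apiece and totalling at least $c_1'\,ss_0$. Choosing the GV constants so that $c_1\geq 1/4$, $(1-c_3')c_2\geq 1/4$, and $\min(c_3 c_2',\,c_1')\geq 1/4$ then delivers both the packing-number lower bound and the required separation $ss_0/4$.

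The main obstacle is the tuning of $c_3$ in the outer $q$-ary GV step: making $c_3$ large enough to guarantee $c_3 c_2'\geq 1/4$ shrinks $|C|$ through a larger $c_3'$, while making $c_3$ small enlarges $|C|$ at the cost of weaker column-wise separation. The constants $c_1,c_1',c_2,c_2'$ emerging from binary GV are fortunately bounded away from their trivial endpoints, so a careful choice of $c_3$ (combined with pushing $c_2'$ close to $1/2$ in the inner step to leave slack) resolves the tradeoff; once this is pinned down, the counting in the second paragraph and the case analysis in the third yield the exponents $1/4$ and the separation $ss_0/4$ claimed in the lemma.
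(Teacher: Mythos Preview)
Your proposal is correct and follows essentially the same two-layer Gilbert--Varshamov construction as the paper: a binary GV packing on column patterns (Step~1), a binary GV packing on within-column patterns treated as an alphabet (Step~2), a $q$-ary GV code over that alphabet with block length $s$ (Step~3), and the same two-case distance analysis (Step~4). The paper resolves the constant tradeoff you describe by taking $c_1=c_1'=\tfrac{1}{4}$, $c_2=c_2'=\tfrac{1}{2}$, and $c_3=\tfrac{1}{2}$, which yields $|C|\geq |D^*|^{s/2}/\binom{s}{s/2}$ and absorbs the $\binom{s}{s/2}$ factor as lower order; with these choices one gets $c_3c_2'=c_1'=\tfrac{1}{4}$ for the separation and $(1-c_3')c_2=\tfrac{1}{4}$ for the inner exponent, exactly as you anticipated.
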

\begin{proof}
  The proof of Lemma \ref{lem2} contains the following four major steps.
  
  {\bf Step 1}:
  Referring to Lemma \ref{lem1}, we can find a packing set $\widetilde{\Gamma} \subset \Gamma$ that meets the following conditions:
  \begin{itemize}
    \item The cardinality of $\widetilde{\Gamma}$ satisfies $|\widetilde{\Gamma}| \geq \exp\left(\frac{s}{4}\log\frac{em}{s}\right)$.
    \item For any $\gamma^1$ and $\gamma^2$ in $\widetilde{\Gamma}$, the Hamming distance $\rho_H(\gamma^1, \gamma^2) \geq \frac{s}{4}$.
\end{itemize}

  {\bf Step 2}:
  Referring to Lemma \ref{lem1}, we can find a packing set $ \widetilde{B} \subset B$ that meets the following conditions:
  \begin{itemize}
    \item The cardinality of $\widetilde{B}$ satisfies $|\widetilde{B}| \geq \exp(\frac{s_0}{2}\log\frac{ed}{s_0})$.
    \item For any $b^1$ and $b^2$ in $\widetilde{B}$, the Hamming distance $\rho_H(b^1, b^2) \geq \frac{s_0}{2}$.
\end{itemize}

  {\bf Step 3}: 
  We consider the maximum possible size of a $|\widetilde{B}|$-ary code with length $s$. From \eqref{ap2}, we have
  \begin{equation*}
    A_{|\widetilde{B}|}(s, \frac{s}{2}) \geq \frac{{|\widetilde{B}|}^s}{\sum_{i=0}^{\frac{s}{2}-1} {s \choose i}(|\widetilde{B}|-1)^i} \geq \frac{{|\widetilde{B}|}^{\frac{s}{2}}}{\exp(s)} \geq \frac{\exp(\frac{ss_0}{4}\log \frac{ed}{s_0})}{\exp(s)},
  \end{equation*}
      where the second inequality follows from $\sum_{i=0}^{\frac{s}{2}-1}{s \choose i} \leq \exp(s)$ and the third inequality follows from step 2.
      Consequently, we obtain a $\frac{s}{2}$-packing set $\mathcal{Q}$ for the $|\widetilde{B}|$-ary code with length $s$. In other words, for any $Q^1$ and $Q^2$ in $\mathcal{Q}$, we have the Hamming distance $\rho_H(Q^1, Q^2) \geq \frac{s}{2}$.     

      Steps 1-3 can be understood as a procedure to construct a packing set $\widetilde{\Theta}^{m,d}_0(s, s_0) \subset \Theta^{m,d}_0(s, s_0)$.
      Intuitively, the elements of $\widetilde{\Gamma}$ determine the positions of the non-zero columns in $\theta \in \widetilde{\Theta}^{m,d}_0(s, s_0)$, while $\widetilde{B}$ specifies the possible candidates for each non-zero column.
      After determining the locations of the non-zero columns, we utilize a code $Q$ from $\mathcal{Q}$ in step 3 to determine the choices for these columns. Subsequently, based on $Q$, we select the corresponding columns from $\widetilde{B}$ to populate the matrix $\theta$
  
  {\bf Step 4}: 
  In step 4, we demonstrate that $\widetilde{\Theta}^{m,d}_0(s, s_0)$ is a $\frac{s s_0}{4}$-packing set of $\Theta^{m,d}_0(s, s_0)$ and derive a lower bound for the cardinality of $\widetilde{\Theta}^{m,d}_0(s, s_0)$.
  We analyze the following two cases for $\theta^i \neq \theta^j \in \widetilde{\Theta}^{m,d}_0(s, s_0)$:
  
  Case 1: If $\gamma^i \neq \gamma^j$, where $\gamma^i$ and $\gamma^j$ represent the indices of the non-zero columns in $\theta^i$ and $\theta^j$ respectively (as obtained from $\widetilde{\Gamma}$ in step 1), we have $\rho_H(\gamma^i, \gamma^j) \geq \frac{s}{4}$. Since both $\theta^i$ and $\theta^j$ belong to $\mathbb{B}^d_0(s_0)$, it follows that $\rho_H(\theta^i, \theta^j) \geq \frac{ss_0}{4}$.
  
  Case 2: If $\gamma^i = \gamma^j$, from step 3, we know that $\theta^i$ and $\theta^j$ have at least $\frac{s}{2}$ different columns, as determined by $\mathcal{Q}$. Moreover, step 2 indicates that the Hamming distance between different columns in $\widetilde{B}$ is at least $\frac{s_0}{2}$. Thus, we have $\rho_H(\theta^i, \theta^j) \geq \frac{ss_0}{4}$.
  
  Combining steps 1-3, we can conclude:
  \begin{align*}
  M(\frac{ss_0}{4};\Theta^{m,d}_0(s, s_0), \|\cdot\|_H) \geq |\widetilde{\Theta}^{m,d}_0(s, s_0)| \geq \exp\left(\frac{1}{4}s \log \frac{em}{s}\right)\exp\left(\frac{ss_0}{4}\log \frac{ed}{s_0}\right).
  \end{align*}
  Note that $\exp(s)$ is negligible due to its slower growth rate compared to the first term.
  \QEDB
\end{proof}

Next, we prove the upper bounds for the covering number of the parameter spaces.

\begin{lemma}[Upper bounds for the covering number]\label{lem3}
  % Denote  $N(\Theta, \|\cdot\|_2, \varepsilon)$ as the $\varepsilon$-covering number of $\Theta$.
  \quad
\begin{itemize}
    \item [(a)] For $q=0$, 
    \begin{equation*}
      \log N(\varepsilon;\Theta^{m,d}_0(s, s_0), \|\cdot\|_F) \leq s\log \frac{em}{s} + ss_0 \log \frac{ed}{s_0} + 2s\log\frac{1}{\varepsilon}.
    \end{equation*}
    \item [(b)] For $q \in (0, 1]$ and for all $\varepsilon \in [\sqrt{s}C_q R_q^{\frac{1}{q}}(\frac{\log d}{d})^{\frac{2-q}{2q}}, \sqrt{s}R_q^{\frac{1}{q}}]$, 
    \begin{equation*}
      \log N(\varepsilon;\Theta^{m,d}_q(s, R_q), \|\cdot\|_F) \leq s\log \frac{em}{s} + s(C_q \frac{s R_q^{\frac{2}{q}}}{\varepsilon^2})^{\frac{q}{2-q}} \log d.
    \end{equation*}
  \end{itemize}
\end{lemma}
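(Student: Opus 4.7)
The plan is to exploit the Khatri-Rao product structure $\Theta^{m,d}_q(s, R_q) = \Gamma \odot \Lambda$ and build the cover in two layers: first an enumeration of the discrete column-support patterns in $\Gamma$, and then, on each fixed column support, a cover of the $s$ nonzero columns in the appropriate column-wise norm. A union bound over the (finitely many) support patterns then combines the two pieces into a cover of the whole set in Frobenius norm.

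For part (a), I would decompose $\Theta^{m,d}_0(s,s_0)$ as a union of subspaces indexed by (i) an $s$-subset of columns of $[m]$ and (ii), for each selected column, an $s_0$-subset of the $d$ coordinates. The number of such joint support patterns is exactly $\binom{m}{s}\binom{d}{s_0}^{s}$, and the standard inequality $\binom{n}{k}\le(en/k)^{k}$ immediately gives the logarithmic bound $s\log(em/s)+ss_0\log(ed/s_0)$. Each support pattern determines a representative element of the cover, and any within-subspace discretization needed to refine to scale $\varepsilon$ produces only lower-order terms that are absorbed by the form of the claim.

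For part (b), the argument is parallel but the continuous $\ell_q$-ball must be covered in $\ell_2$. I apportion the Frobenius budget across the $s$ nonzero columns: a cover of $\mathbb{B}^d_q(R_q)$ at scale $\varepsilon/\sqrt{s}$ per column, applied independently to each of the $s$ columns, yields a Frobenius cover of the product at scale $\varepsilon$. For the per-column cover I invoke the dyadic entropy estimate \eqref{high}, namely $\epsilon_k(B_q^d(1),\|\cdot\|_2)\asymp(\log(d/k+1)/k)^{1/q-1/2}$ valid for $\log d\le k\le d$. Rescaling by $R_q^{1/q}$ and solving $R_q^{1/q}(\log(d/k+1)/k)^{1/q-1/2}\asymp\varepsilon/\sqrt{s}$ for $k$ gives, after using $\log(d/k+1)\asymp\log d$ in this regime,
\[
k \asymp \left(\frac{C_q\, s\, R_q^{2/q}}{\varepsilon^{2}}\right)^{q/(2-q)} \log d,
\]
so that $\log N(\varepsilon/\sqrt{s};\mathbb{B}^d_q(R_q),\|\cdot\|_2)\le k$. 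Multiplying by $s$ and adding $\log\binom{m}{s}\le s\log(em/s)$ from the column-pattern enumeration yields the stated bound.

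The main obstacle, and the part that most deserves care, is to verify that the stated range $\varepsilon\in[\sqrt{s}C_qR_q^{1/q}(\log d/d)^{(2-q)/(2q)},\sqrt{s}R_q^{1/q}]$ corresponds precisely to the regime $\log d\le k\le d$ in which \eqref{high} is valid. Inverting the relation between $k$ and $\varepsilon$, the lower endpoint of $\varepsilon$ is exactly the scale at which $k$ saturates at $d$ (preventing a fall into regime \eqref{low}), while the upper endpoint $\sqrt{s}R_q^{1/q}$ is the Frobenius diameter of the ambient set (using $\|\cdot\|_2\le\|\cdot\|_q$ for $q\le2$) and forces $k\ge\log d$ (keeping us out of the trivial regime \eqref{wzero}). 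Once this range check is carried out, the union bound over column-support patterns and the $s$-fold per-column cover combine to give the advertised entropy estimate.
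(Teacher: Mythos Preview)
Your proposal is correct and follows essentially the same approach as the paper: first enumerate the $\binom{m}{s}$ column-support patterns, then cover each of the $s$ nonzero columns at scale $\varepsilon/\sqrt{s}$ using the dyadic entropy estimate \eqref{high} for $\ell_q$-balls (inverted to solve for $k$), and combine via the product bound $N(\varepsilon;\Theta^{m,d}_q(s,R_q),\|\cdot\|_F)\le\binom{m}{s}\bigl(N(\varepsilon/\sqrt{s};\mathbb{B}^d_q(R_q),\|\cdot\|_2)\bigr)^s$. Your range check for $\varepsilon$ matching $k\in[\log d,d]$ is exactly what the paper does as well.
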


\begin{proof}
We first prove case (b), and case (a) can be proved similarly. For case (b), define $\widetilde{B}^d_q(R_q)$ as the $\frac{\varepsilon}{\sqrt{s}}$-covering set of $B^d_q(R_q)$ with respect to $\|\cdot\|_2$.
Given $\theta \in \Theta^{m,d}_q(s, R_q)$, the $j$-th column $\theta_j \in B^d_q(R_q)$. We can find a vector $b^j \in\widetilde{B}^d_q(R_q)$ such that $\|b^j - \theta_j\|_2^2 \leq \frac{\varepsilon^2}{s}$.
Consequently, we have 
\begin{equation*}
  N(\varepsilon;\Theta^{m,d}_q(s, R_q), \|\cdot\|_F) \leq {m \choose s} \left(N(\frac{\varepsilon}{\sqrt{s}};B_q^d(R_q), \|\cdot\|_2)\right)^s.
\end{equation*}
By inverting known results on entropy numbers of $\ell_q$-balls, i.e., \eqref{high}, we have
\begin{align*}
    \epsilon_k(B_q^d(1)) = \frac{\varepsilon}{\sqrt{s}} \leq &\left(\frac{\log(\frac{d}{k}+1)}{k}\right)^{\frac{1}{q}-\frac{1}{2}}\\
  \leq & \left(\frac{\log d}{k}\right)^{\frac{1}{q}-\frac{1}{2}}.
\end{align*}
Inverting this inequality for $k = \log N(\frac{\varepsilon}{\sqrt{s}}; B_q^d(1), \|\cdot\|_2)$ and allowing for a ball radius $R_q^{1/q}$ yields
\begin{align*}
  \log N(\frac{\varepsilon}{\sqrt{s}}; B_q^d(1), \|\cdot\|_2) \leq (C_q \frac{s R_q^{\frac{2}{q}}}{\varepsilon^2})^{\frac{q}{2-q}} \log d.
\end{align*}
  The conditions on the range of $\varepsilon$ guarantee that $k \in [\log d, d]$.
Combining these results, we have
\begin{align*}
  \log N(\varepsilon;\Theta^{m,d}_q(s, R_q), \|\cdot\|_F) &\leq s\log \frac{em}{s} + s\log N(\frac{\varepsilon}{\sqrt{s}}; B_q^d(1), \|\cdot\|_2)\\
  &\leq s\log \frac{em}{s} + s(C_q \frac{s R_q^{\frac{2}{q}}}{\varepsilon^2})^{\frac{q}{2-q}} \log d.
\end{align*}
Case (a) can be derived in a similar way as case (b), in which we just need to replace the covering number of $B_q^d(R_q)$ by $B_0^d(s_0)$.
\QEDB
\end{proof}

\subsection{Proof of Theorem 1}\label{proof1}

\subsubsection{Proof of case (a)}
\begin{proof}\label{proof_lower}
  For case (a), let's consider the $\frac{ss_0}{4}$-packing set $\widetilde{\Theta}^{m,d}_0(s, s_0) = {\theta^1, \ldots, \theta^{M}}$ defined in Lemma \ref{lem2}, where $M$ is the cardinality of $\widetilde{\Theta}^{m,d}_0(s, s_0)$.

  We set all non-zero elements of $\theta \in \widetilde{\Theta}^{m,d}_0(s, s_0)$ equal to 1, and let $\vartheta^i = \theta^i \delta$, where $\delta$ is a parameter to be determined below.
  For any $\vartheta^i \neq \vartheta^j$, since each $\theta \in \widetilde{\Theta}^{m,d}_0(s, s_0)$ has at most $ss_0$ non-zero elements, we have:
  \begin{equation}\label{ap16}
  \|\vartheta^i-\vartheta^j\|_F^2 \leq 2ss_0\delta^2,\quad \forall i,j \in [M].
  \end{equation}
  On the other hand, from the construction of $\widetilde{\Theta}^{m,d}_0(s, s_0)$, we have:
  \begin{equation}\label{ap3}
  \|\vartheta^i-\vartheta^j\|_F^2 \geq \frac{1}{4}ss_0\delta^2,\quad \forall i,j \in [M].
  \end{equation}
  Applying the property of mutual information \cite{wu2017lecture}, we obtain an upper bound on $I(y;B)$:
  \begin{align}\label{ap6}
  \begin{split}
  I(\bar Y;\psi) &\leq \frac{1}{{M \choose 2}} \sum_{i \neq j} KL(\vartheta^i \| \vartheta^j)\\
  &= \frac{1}{{M \choose 2}} \sum_{i \neq j} \frac{n}{2\sigma^2}\|\vartheta^i-\vartheta^j\|_F^2\\
  &\leq \frac{n}{\sigma^2}ss_0 \delta^2,
  \end{split}
  \end{align}
  where $KL(\cdot \| \cdot)$ denotes the Kullback-Leibler divergence, and the last inequality follows from \eqref{ap16}.
  Combining Fano's inequality \cite{thomas2006elements} and \eqref{ap6}, we have:
  \begin{equation*}
  P(\hat \vartheta \neq \psi ) \geq 1 - \frac{\frac{n}{\sigma^2}ss_0 \delta^2+\log 2}{\log M},
  \end{equation*}
  where $\psi$ is the random vector uniformly distributed over the packing set $\widetilde{\Theta}^{m,d}_0(s, s_0)$.
  To ensure that $P(\hat \vartheta \neq \psi) \geq \frac{1}{2}$, it suffices to choose:
  \begin{equation*}
  \delta = \frac{1}{2}\sqrt{\left(\frac{1}{4} s \log \frac{em}{s}+\frac{ss_0}{4}\log \frac{ed}{s_0}\right)\frac{\sigma^2}{ss_0n}}.
  \end{equation*}
  Substituting this into equation \eqref{ap3} and using Lemma \ref{lem2}, we have:
  \begin{equation*}
  \inf_{\hat \theta} \sup_{\theta \in \Theta^{m,d}_0(s, s_0)}P\left(\|\hat{\theta}-\theta\|_F^2 \geq \frac{\sigma^2}{64n}\left(s \log \frac{em}{s}+ss_0\log \frac{ed}{s_0}\right)\right) \geq \frac{1}{2},
  \end{equation*}
  which completes the proof. A Markov’s inequality argument leads to the lower bound in expectation.
\end{proof}

\subsubsection{Proof of case(b)}
\begin{proof}
	Here we consider splitting the proof of case (b) into two cases.
	The first case focuses on the parameter space in that the element has the same entries with different indices of the non-zero columns.
	The other case studies the parameter space that the element has the same indices of the non-zero columns but the entries are different.
	At last, we combine these two cases by union bounds.
	
	Define $\Lambda = \{\lambda : \lambda \in \delta \times \{0, 1\}^d\ \text{and}\ \|\lambda\|_0 \leq s_0\}$, where $\delta$ satisfies that $\delta^q s_0=R_q$, i.e, $\Lambda \subseteq \mathbb{B}^d_q(R_q)$.

{\bf Case 1:} Fix $\lambda \in \Lambda$ and produce the index set $\tilde{\Gamma}$  according to the step 1 of Lemma \ref{lem2}. And we obtain:
$$
\widetilde{\Theta}_q^{m,d}(s, R_q, \lambda) = \{\gamma \otimes \lambda: \gamma \in \tilde{\Gamma}\}
$$

Therefore, $|\widetilde{\Theta}_q^{m,d}(s, R_q, \lambda)| = |\tilde{\Gamma}|$. For any $\theta^i\neq \theta^j \in \widetilde{\Theta}_q^{m,d}(s, R_q, \lambda)$, because we have $\rho_H(\gamma^i, \gamma^j) \geq \frac{s}{4}$, therefore
\begin{equation}\label{ap17}
  \|\theta^i - \theta^j\|_F^2 \geq \frac{1}{4}ss_0\delta^2,
\end{equation}
which implies that $\widetilde{\Theta}_q^{m,d}(s, R_q, \lambda)$ is a $\frac{1}{4}s s_0 \delta^2$-packing set of $\Theta_q^{m,d}(s, R_q)$.
On the other hand, $\gamma^i$ and $\gamma^j$ differ in at most $2s$ positions. Therefore, 
\begin{equation}\label{ap18}
  \|\theta^i - \theta^j\|_F^2 \leq 2ss_0\delta^2.
\end{equation}
Similar to \eqref{ap6}, we have
\begin{align}\label{ap13}
  \begin{split}
    I(\bar Y;\psi) \leq& \frac{1}{{M_{\lambda} \choose 2}} \sum_{i \neq j} KL(\theta^i || \theta^j)\\
    = & \frac{1}{{M_{\lambda}  \choose 2}} \sum_{i \neq j} \frac{n}{2\sigma^2}\|\theta^i-\theta^j\|_F^2\\
    \leq&\frac{n}{\sigma^2}ss_0 \delta^2,
  \end{split}
\end{align}
where the last inequality follows from \eqref{ap18}.
Combining Fano's inequality and \eqref{ap13}, we have
\begin{equation*}
  P(\psi \neq \hat \theta) \geq 1 - \frac{\frac{n}{\sigma^2}ss_0 \delta^2+\log 2}{\frac{1}{4}s\log \frac{em}{s}},
\end{equation*}
where $\psi$ is the random vector uniformly distributed over the packing set $\widetilde{\Theta}_q^{m,d}(s, R_q, \lambda)$.
It suffices to choose $\delta = \sqrt{\frac{\sigma^2}{4ns_0}\log \frac{em}{s}}$ to guarantee $P(\psi \neq \hat \theta) \geq \frac{1}{2}$.
Note that the triple $(\delta, s_0, R_q)$ satisfies $\delta^q s_0=R_q$. Some algebras imply that $\delta = (\frac{\sigma^2\log \frac{em}{s}}{16nR_q})^{\frac{1}{2-q}}$.
Substituting into \eqref{ap17}, we have 
\begin{equation}\label{ap4}
  \inf_{\hat \theta} \sup_{\theta \in \widetilde{\Theta}_q^{m,d}(s, R_q, \lambda)}P\left( \|\hat \theta - \theta \|^2_F \geq \frac{\sigma^2}{64n}s \log \frac{em}{s}\right) \geq \frac{1}{2}.
\end{equation}

{\bf Case 2:} Consider an index vector $\gamma \in \widetilde{\Gamma}$ such that $\gamma_i = 1$ for $i \in [s]$ and $\gamma_i = 0$ otherwise. Similar to steps 2-3 in Lemma \ref{lem2}, let $\tilde{\Lambda} \subseteq \Lambda^s$ denote the set consisting of all $s \times d$ matrices obtained by those steps. In other words,
\begin{equation*}
\widetilde{\Theta}_q^{m,d}(s, R_q, \gamma) = \{\theta: \theta_i = L_i,\ i \in [s]; \theta_{i} = 0,\ \text{otherwise};\ L \in \tilde{\Lambda}^s\}.
\end{equation*}
From the {\bf{Case} 2} in Step 4 in Lemma \ref{lem1}, we have the following inequality:
\begin{equation*}
\|\theta^i - \theta^j \|_F^2 \geq \frac{1}{4}s s_0 \delta^2,
\end{equation*}
which implies that $\widetilde{\Theta}_q^{m,d}(s, R_q, \gamma)$ forms a $\frac{1}{4}s s_0 \delta^2$-packing set of $\Theta_q^{m,d}(s, R_q)$.
Additionally, we have:
\begin{equation*}
|\widetilde{\Theta}_q^{m,d}(s, R_q, \gamma)| \geq \frac{\exp(\frac{ss_0}{4}\log \frac{ed}{s_0})}{\exp(s)}.
\end{equation*}

On the other hand, the following inequality holds:
\begin{equation*}
\|\theta^i - \theta^j \|_F^2 \leq 2s s_0 \delta^2.
\end{equation*}

By combining Fano's inequality and a similar derivation as in \eqref{ap13}, we obtain:
\begin{equation*}
P(\hat{\theta} \neq \psi) \geq 1 - \frac{\frac{n}{\sigma^2}ss_0 \delta^2+\log 2}{\frac{1}{4}ss_0\log \frac{ed}{s_0}},
\end{equation*}
where $\psi$ is a random vector uniformly distributed over the packing set $\widetilde{\Theta}_q^{m,d}(s, R_q, \gamma)$.
To ensure $P(\hat{\theta} \neq \psi) \geq \frac{7}{8}$, let's assume $s_0 = d^v$ for some constant $v \in (0, 1]$. Choosing $\delta = (\frac{(1-v)\sigma^2}{4n}\log d)^{\frac{1}{2}}$ based on \eqref{eq:Rq}, we obtain $s_0 = R_q(\frac{(1-v)\sigma^2}{4n}\log d)^{-\frac{q}{2}}$. As a result, we have:
\begin{align}\label{ap5}
\begin{split}
\inf_{\hat{\theta}} \sup_{\theta \in \widetilde{\Theta}_q^{m,d}(s, R_q, \gamma)}P\left( \|\hat{\theta} - \theta \|^2_F \geq\frac{1}{4n}s R_q(\frac{(1-v)\sigma^2}{4n}\log d)^{1-\frac{q}{2}}\right)\geq \frac{7}{8}.
\end{split}
\end{align}
By applying the union bound to combine the risk in \eqref{ap4} and \eqref{ap5}, for any $\hat{\theta}$, we have:
\begin{align*}
  P(\|\hat \theta - \theta\|_F^2 &\geq \frac{1}{4}s R_q(\frac{\sigma^2v }{4n}\log d)^{1-\frac{q}{2}}+\frac{\sigma^2}{64n}s\log\frac{em}{s})\\
  &\geq1-P(\|\hat \theta - \theta\|_F^2 \leq \frac{1}{4}s R_q(\frac{(1-v)\sigma^2}{4n}\log d)^{1-\frac{q}{2}})\\
  &\quad -P(\|\hat \theta - \theta\|_F^2 \leq \frac{\sigma^2}{64n}s\log\frac{em}{s})\\
  &=P(\|\hat \theta - \theta\|_F^2 \geq \frac{1}{4}s R_q(\frac{(1-v)\sigma^2}{4n}\log d)^{1-\frac{q}{2}})\\
  &\quad +P(\|\hat \theta - \theta\|_F^2 \geq \frac{\sigma^2}{64n}s\log\frac{em}{s})-1.
\end{align*}
Taking the supremum on both sides, we obtain:
\begin{align*}
  &\sup_{\theta \in \Theta_q^{m,d}(s, R_q)}P(\|\hat \theta - \theta\|_F^2 \geq \frac{1}{4}s R_q(\frac{\sigma^2v }{4n}\log d)^{1-\frac{q}{2}}+\frac{\sigma^2}{64n}s\log\frac{em}{s})\\
  \geq&\sup_{\theta \in \widetilde{\Theta}_q^{m,d}(s, R_q, \lambda)}P(\|\hat \theta - \theta\|_F^2 \geq \frac{1}{4}s R_q(\frac{(1-v)\sigma^2}{4n}\log d)^{1-\frac{q}{2}})\\
  &\quad +\sup_{\theta \in \widetilde{\Theta}_q^{m,d}(s, R_q, \gamma)}P(\|\hat \theta - \theta\|_F^2 \geq \frac{\sigma^2}{64n}s\log\frac{em}{s})-1\\
  \geq&\frac{7}{8}+\frac{1}{2}-1 = \frac{3}{8}.
\end{align*}
Therefore,
\begin{align*}
\inf_{\hat{\theta}} \sup_{\theta \in \Theta_q^{m,d}(s, R_q)}P\left(\|\hat{\theta}-\theta\|_F^2 \geq \frac{\sigma^2}{64n}s\log\frac{em}{s}+\frac{1}{4}s R_q(\frac{(1-v)\sigma^2}{4n}\log d)^{1-\frac{q}{2}}\right)\geq \frac{3}{8},
\end{align*}
which completes the proof of \eqref{e6}. A Markov’s inequality argument leads to the lower bound in expectation.
\QEDB
\end{proof}

\subsection{Technical Lemmas for upper bounds}

Before formally providing the proof of upper bounds, we provide some useful technical lemmas.
Define the function $f(v ; X)$, where $v \in \mathbb{R}^{d \times m}$ is the vector to be optimized over, and $X$ is some random matrix. 
We are interested in the constrained problem $\sup _{\rho(v) \leq r, v \in A} f(v ; X)$, where $\rho: \mathbb{R}^{d \times m} \rightarrow \mathbb{R}^{+}$ is some increasing constraint function, and $A$ is a non-empty set. 
The goal is to bound the probability of the event defined by
$$
\mathcal{Z}:=\left\{X \in \mathbb{R}^{n \times d} : \exists v \in A \text { s.t. } f(v ; X) \geq 2 g(\rho(v))\right\}
$$
where $g: \mathbb{R} \rightarrow \mathbb{R}^+$ is some strictly increasing function.
\begin{lemma}[Peeling, Lemma 9 of \cite{raskutti2011minimax}]\label{peeling}
Suppose $g(r) \geq \mu,\ \forall r \geq 0$. There exists some constant $c>0$ such that for all $r>0$, we have the tail bound
$$
\mathbb{P}\left\{\sup _{v \in A, \rho(v) \leq r} f(v ; X) \geq g(r)\right\} \leq 2 \exp \left(-c a_{n} g(r)\right)
$$
for some $a_{n}>0$. Then, we have
$$
\mathbb{P}(\mathcal{Z}) \leq \frac{2 \exp \left(-4 c a_{n} \mu\right)}{1-\exp \left(-4 c a_{n} \mu\right)}.
$$
\end{lemma}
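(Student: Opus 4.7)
The plan is to apply the classical peeling (or slicing) argument from empirical process theory \cite{geer2000empirical}: decompose the constraint set $A$ into countably many geometric shells indexed by $g(\rho(v))$, apply the given one-shell tail bound on each, and combine via the union bound.

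First I would define the shells
\[
S_k = \{v \in A : 2^{k-1}\mu \leq g(\rho(v)) < 2^k\mu\}, \quad k = 1, 2, 3, \ldots,
\]
which cover $A$ because $g(\rho(v)) \geq \mu$ by hypothesis. Since $g$ is strictly increasing, I can pick $r_k$ with $g(r_k) = 2^k\mu$, and hence $S_k \subseteq \{v \in A : \rho(v) \leq r_k\}$. The crucial observation is that on $S_k$, the inequality $f(v;X) \geq 2g(\rho(v))$ forces $f(v;X) \geq 2 \cdot 2^{k-1}\mu = 2^k\mu = g(r_k)$; the factor $2$ in the definition of $\mathcal{Z}$ is exactly what lets this lower bound double from $2^{k-1}\mu$ up to $g(r_k)$, so that the single-shell hypothesis is applicable at radius $r_k$. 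Using it on shell $k$ gives
\[
\mathbb{P}[\exists v \in S_k : f(v;X) \geq 2g(\rho(v))] \leq \mathbb{P}\Bigl[\sup_{v \in A,\ \rho(v) \leq r_k} f(v;X) \geq g(r_k)\Bigr] \leq 2\exp(-ca_n 2^k\mu).
\]

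Finally, a union bound over $k \geq 1$ combined with the elementary estimate $2^k \geq 2k$ for $k \geq 1$ yields
\[
\mathbb{P}[\mathcal{Z}] \leq \sum_{k=1}^\infty 2\exp(-ca_n 2^k\mu) \leq \sum_{k=1}^\infty 2\exp(-2ca_n k\mu) = \frac{2\exp(-2ca_n\mu)}{1-\exp(-2ca_n\mu)},
\]
which matches the claimed form after absorbing a harmless absolute constant into $c$. The proof is otherwise mechanical; the only genuinely delicate point is that the shells must be indexed by $g(\rho(v))$ rather than by $\rho(v)$ itself, because it is the value of $g$ evaluated at the shell radius $r_k$ that appears on both sides of the single-shell tail inequality, and this alignment between the doubling factor in $\mathcal{Z}$ and the geometric doubling of the shells is what makes the hypothesis applicable shell-by-shell. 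Beyond choosing this partition correctly, there is no substantial obstacle.
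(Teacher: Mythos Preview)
Your argument is the standard peeling proof and is essentially correct. Note, however, that the paper does not actually prove this lemma: it is quoted verbatim as Lemma~9 of \cite{raskutti2011minimax} and used as a black box, so there is no ``paper's own proof'' to compare against beyond that citation.

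One small point worth flagging: your final bound has $2ca_n\mu$ in the exponent rather than the stated $4ca_n\mu$, and you handle this by ``absorbing a harmless absolute constant into $c$.'' Strictly speaking $c$ is the constant \emph{given} in the hypothesis, not a free parameter you may redefine, so the sentence is slightly imprecise. What you have actually proved is the (weaker but perfectly usable) bound $\mathbb{P}[\mathcal{Z}]\le \frac{2\exp(-2ca_n\mu)}{1-\exp(-2ca_n\mu)}$; the factor $4$ in the cited statement comes from a marginally different slicing convention in \cite{raskutti2011minimax}. This discrepancy is immaterial for every application in the present paper, since only the exponential-in-$n$ decay is used and the constant in front of $a_n\mu$ is never tracked.
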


The proof of Lemma \ref{peeling} can be seen \cite{raskutti2011minimax} for more details.
In addition,  we also need the double sparse version of Lemma 6 of \cite{raskutti2011minimax}.
Denote $\widetilde{\mathcal{S}}(\Theta^{m,d}_0(2s, 2s_0), r) = \Theta^{m,d}_0(2s, 2s_0) \cap \{\theta \in \mathbb{R}^{d \times m}:\|\theta\|_F^2 \leq r\}$
and $\widetilde{\mathcal{S}}(\Theta^{m,d}_q(2s, 2R_q), r) = \Theta^{m,d}_q(2s, 2R_q) \cap \{\theta \in \mathbb{R}^{d \times m}|\|\theta\|_F^2 \leq r\}$.
\begin{lemma}\label{covering}
There exists some constants $C_1,C_2>0$ such that for any $r > 0$, we have
\begin{align*}
  \sup_{\theta \in \widetilde{\mathcal{S}}(\Theta^{m,d}_0(2s, 2s_0), r)} |\langle \bar Z, \theta \rangle| \leq C_u\sigma r \sqrt{\frac{1}{n}(s \log \frac{em}{s}+ss_0\log \frac{ed}{s_0})}
\end{align*}
with probability greater than $1-C_1\exp \{-C_2 (s \log \frac{em}{s}+ss_0\log \frac{ed}{s_0})\}$.
\end{lemma}

Lemma \ref{covering} is a direct consequence of Lemma 6 of \cite{raskutti2011minimax}, in which we replace the covering number of $\ell_0$-ball by 
the covering number of $\Theta^{m,d}_{0}(2 s, 2s_0)$ in Lemma \ref{lem3} (a).

\begin{lemma}\label{chaining}
  Assume that
   \begin{equation}\label{ap15}
     \log \frac{em}{s} \leq C_2 n R_q^{\frac{2}{q}}.
   \end{equation}
   There exists some constants $C_3,C_4>0$ such that for any $r = \Omega(\sqrt{\frac{s\log\frac{em}{s}}{n}}+\sqrt{sR_q}(\frac{\log d}{n})^{\frac{1}{2}-\frac{q}{4}})$, we have
  \begin{align*}
    \sup_{\theta \in \widetilde{\mathcal{S}}(\Theta^{m,d}_q(2s, 2R_q), r)} |\langle \bar Z, \theta \rangle| \leq \left(\sqrt{\frac{s \log \frac{em}{s}}{n}}+\sqrt{s R_q}(\frac{\log d}{n})^{\frac{1}{2}-\frac{q}{4}}\right)r.
  \end{align*} 
  with probability at least $1-C_3 \exp\{-C_4 n (\frac{s\log \frac{em}{s}}{n}+sR_q(\frac{\log d}{n})^{1-\frac{q}{2}})\}$.

\end{lemma}

\begin{proof}
    Similar to Lemma 7 in \cite{raskutti2011minimax},  we want to construct a constant $\delta$ that satisfies
  \begin{equation}\label{ap8}
    \sqrt{n} \delta \geq C_{1} r \\
  \end{equation}
  and
  \begin{align}\label{ap9}
    \begin{split}
    C_{2}\sqrt{n} \delta &\geq  \int_{\frac{\delta}{16}}^{r} \sqrt{\log N\left(t ; \widetilde{\mathcal{S}}(\Theta^{m,d}_q(2s, 2R_q), r),\|\cdot\|_F\right)} d t \coloneqq J(r, \delta).
  \end{split}
  \end{align}
  % where $N\left(t ; \widetilde{\mathcal{S}}(\Theta^{m,d}_q(2s, 2R_q), r), \|\cdot\|_F\right)$ is the covering number of $\widetilde{\mathcal{S}}(\Theta^{m,d}_q(2s, 2R_q), r)$.
  As long as $\|\bar Z\|_F^2 \leq 16$, Lemma 3.2 in \cite{geer2000empirical} guarantees that 
  \begin{align*}
    P\left ( \sup_{\theta \in \widetilde{\mathcal{S}}(\Theta^{m,d}_q(2s, 2R_q), r)} |\langle \bar Z, \theta\rangle| \geq \delta, \|\bar Z\|_2^2\leq 16\right )
    \leq C_3 \exp(-C_4 \frac{n\delta^2}{r^2}).
  \end{align*}
  Note that each entry of $\bar Z$ draws from $N(0, \frac{\sigma^2}{n})$. By the tail bounds of $\chi^2$ random variables in \cite{raskutti2011minimax}, we have $P(\|\bar Z\|_F^2 \geq 16) \leq C_5 \exp(-C_6n)$.
  Consequently, we have
  \begin{align*}
    P\left ( \sup_{\theta \in \widetilde{\mathcal{S}}(\Theta^{m,d}_q(2s, 2R_q), r)} |\langle \bar Z, \theta \rangle| \geq \delta\right ) \leq C_3 \exp(-C_4 \frac{n\delta^2}{r^2})+C_5 \exp(-C_6n).
  \end{align*}
  % \begin{align*}
  %   &P\left ( W(\Theta^{m,d}_q(2s, 2R_q), r) \geq \delta\right ) \\
  %   \leq& C_3 \exp(-C_4 \frac{n\delta^2}{r^2})+C_5 \exp(-C_6n).
  % \end{align*}
  Next, we construct $\delta$ to satisfy conditions $\eqref{ap8}$ and $\eqref{ap9}$.
  Let $\delta = r(\sqrt{\frac{s\log \frac{em}{s}}{n}}+\omega)$, where $\omega>0$ is some constnt to be determined latter.
  Obviously, \eqref{ap8} holds immediately. Turning to \eqref{ap9}, 
  based on the condition \eqref{ap15}, we set $r = \Omega(\sqrt{\frac{s\log\frac{em}{s}}{n}}+\sqrt{sR_q}(\frac{\log d}{n})^{\frac{1}{2}-\frac{q}{4}})\wedge \sqrt{s}R_q^{\frac{1}{q}} $, and it is straightforward to verify that $(\frac{\delta}{16}, r)$ lies in the range of $\varepsilon$ in Lemma \ref{lem3}.
  Combined with the part (b) of Lemma \ref{lem3},
  we have 
  \begin{align*}
    J(r, \delta) &= \int_{\frac{\delta}{16}}^{r} \sqrt{\log N\left(t ; \widetilde{\mathcal{S}}(\Theta^{m,d}_q(2s, 2R_q), r)\right)}\ d t\\
    &\leq \int_{0}^{r} \sqrt{2s\log\frac{em}{s}+2s(\frac{s}{t^2}R_q^{\frac{2}{q}})^{\frac{q}{2-q}} \log d}\ d t\\
    &\leq \sqrt{2s \log \frac{em}{s}} r+\sqrt{2}(s R_q)^{\frac{1}{2-q}}\sqrt{\log d} r^{1-\frac{q}{2-q}}.
  \end{align*}
  With our choice of $\delta$, we have
  \begin{equation*}
    \frac{J(r, \delta)}{\sqrt{n}\delta} \leq \frac{\sqrt{2s \log \frac{em}{s}} r+\sqrt{2}(s R_q)^{\frac{1}{2-q}}\sqrt{\log d} r^{1-\frac{q}{2-q}}}{r\sqrt{s\log\frac{em}{s}}+r\sqrt{n}\omega}.
  \end{equation*}
  Setting $\omega = \sqrt{2}(s R_q)^{\frac{1}{2-q}}\sqrt{\frac{\log d}{n}} r^{1-\frac{q}{2-q}}$, we obtain $\frac{J(r, \delta)}{\sqrt{n}\delta} \leq \sqrt{2}$, which implies that \eqref{ap9} holds.
  
  Consequently, we obtain that $\delta = \left(\sqrt{\frac{s\log \frac{em}{s}}{n}}+\sqrt{sR_q}(\frac{\log d}{n})^{\frac{1}{2}-\frac{q}{4}}\right)r$. Overall, we conclude that
  \begin{align*}
    &P\left(\sup_{\theta \in \widetilde{\mathcal{S}}(\Theta^{m,d}_q(2s, 2R_q), r)} |\langle \bar Z, \theta \rangle| \geq \left(\sqrt{\frac{s \log \frac{em}{s}}{n}}+\sqrt{s R_q}(\frac{\log d}{n})^{\frac{1}{2}-\frac{q}{4}}\right)r \right )\\
\leq& C_3 \exp\{-C_4 n (\frac{s\log \frac{em}{s}}{n}+sR_q(\frac{\log d}{n})^{1-\frac{q}{2}})\},
  \end{align*} 
which completes the proof of Lemma \ref{chaining}.
  \QEDB
\end{proof}

\subsection{Proof of Theorem 2}\label{proof2}
We consider the constrained MLE estimator in \eqref{e7}. Note that for $q \in [0, 1]$,
  \begin{equation*}
    \|\bar Y-\hat \theta_q\|_F^2 \leq \|\bar Y - \theta^*\|_F^2.
  \end{equation*}
  By some simple algebras, we have
  \begin{equation}\label{ap7}
    \|\hat\theta_q - \theta^*\|_F^2 \leq 2 |\langle \bar Z, \hat\theta_q-\theta^*\rangle|.
  \end{equation}

 \subsubsection{Proof of case(a)}
 \begin{proof}\label{proof4}
  For case (a),
  since $\hat \theta_0, \theta^* \in \Theta^{m,d}_0(s, s_0)$, we have $\theta_0 - \theta^* \in \Theta^{m,d}_0(2s, 2s_0)$. 
  From Lemma \ref{covering}, for any $r > 0$, we have
  \begin{equation*}
      \sup_{\theta \in \widetilde{\mathcal{S}}(\Theta^{m,d}_0(2s, 2s_0), r)} |\langle \bar Z, \theta \rangle| \leq C_u \sigma r \sqrt{\frac{1}{n}(s \log \frac{em}{s}+ss_0\log \frac{ed}{s_0})}
    \end{equation*}
    with probability greater than $1-C_1\exp \{-C_2 (s \log \frac{em}{s}+ss_0\log \frac{ed}{s_0})\}$.
      
  Consider the event $\mathcal{Z}$ that there exists some $\theta$ satisfying $\theta \in \Theta^{m,d}_0(2s, 2s_0)$ such that
  \begin{equation}\label{ap14}
    |\langle \bar Z, \theta \rangle| \geq C_u \sigma \|\theta\|_F \sqrt{\frac{1}{n}(s \log \frac{em}{s}+ss_0\log \frac{ed}{s_0})}.
  \end{equation}
  Following from Lemma \ref{peeling}, 
  we have 
  \begin{equation*}
    P(\mathcal{Z}) \leq \frac{2\exp(-C_3 (s \log \frac{em}{s}+ss_0\log \frac{ed}{s_0}))}{1- \exp(-C_3 (s \log \frac{em}{s}+ss_0\log \frac{ed}{s_0}))}.
  \end{equation*}
  This claim follows from Lemma \ref{peeling} by choosing the function $f(v ; X)=\langle\bar Z, \theta\rangle$, the set $A=\Theta^{m,d}_0(2s, 2s_0)$, the sequence $a_{n}=n$, and the functions $\rho(v)=\|v\|_{2}$, and $g(r)=C_u\sigma r \sqrt{\frac{1}{n}(s \log \frac{em}{s}+ss_0\log \frac{ed}{s_0})}$. For any $r \geq \sigma \sqrt{\frac{1}{n}(s \log \frac{em}{s}+ss_0\log \frac{ed}{s_0})}$, we are guaranteed that $g(r) \geq  \frac{\sigma^{2}}{n}(s \log \frac{em}{s}+ss_0\log \frac{ed}{s_0})$ and $\mu = \frac{\sigma^{2}}{n}(s \log \frac{em}{s}+ss_0\log \frac{ed}{s_0})$, so that the lemma can be applied.

  Consequently, by \eqref{ap7} and \eqref{ap14}, some algebra shows that
  \begin{equation*}
    \|\hat\theta_0-\theta^*\|_F^2 \leq C_u\frac{\sigma^2}{n}( s \log \frac{em}{s}+ss_0\log \frac{ed}{s_0})
  \end{equation*}
      with probability greater than $1-C_1\exp \{-C_2 (s \log \frac{em}{s}+ss_0\log \frac{ed}{s_0})\}$. This completes the proof of \eqref{eq:u1}.
\end{proof}
\subsubsection{Proof of case(b)}

\begin{proof}
  For case (b),
  since $\hat \theta_q, \theta^* \in \Theta^{m,d}_q(s, R_q)$, we have $\hat \theta_q - \theta^* \in \Theta^{m,d}_q(2s, 2R_q)$.  
 
  %     Since the situation $\log \frac{em}{s} \geq C_2 n R_q^{\frac{2}{q}}(\log d)^{1-\frac{2}{q}}$ leads to a zero estimator in the minimax optimal sense, which is trival.
We define the event $\mathcal{Z}$ as there exists some $\theta$ such that $\theta \in \Theta^{m,d}_q(s, R_q)$ so that by Lemma \ref{chaining}, we have
  \begin{equation*}
    |\langle \bar Z, \theta\rangle| \geq C_u \|\theta\|_F\left(\sqrt{\frac{s \log \frac{em}{s}}{n}}+\sqrt{s R_q}(\frac{\log d}{n})^{\frac{1}{2}-\frac{q}{4}}\right).
  \end{equation*}
with probability at least $1-C_3 \exp\{-C_4 n (\frac{s\log \frac{em}{s}}{n}+sR_q(\frac{\log d}{n})^{1-\frac{q}{2}})\}$.  From Lemma \ref{peeling}, we have
  \begin{equation}\label{ap11}
    P(\mathcal{Z}) \leq \frac{2 \exp(-C_3n (\frac{s\log \frac{em}{s}}{n}+sR_q(\frac{\log d}{n})^{1-\frac{q}{2}}))}{1 - \exp(-C_3n (\frac{s\log \frac{em}{s}}{n}+sR_q(\frac{\log d}{n})^{1-\frac{q}{2}}))}.
  \end{equation}
  This claim follows from Lemma \ref{peeling} by choosing the function $f(v ; X)=\langle \bar Z, \theta\rangle$, the set $A=\Theta^{m,d}_q(2s, 2R_q)$, the sequence $a_{n}=n$, and the functions $\rho(v)=\|v\|_{F}$, and $g(r)=r\left(\sqrt{\frac{s \log \frac{em}{s}}{n}}+\sqrt{s R_q}(\frac{\log d}{n})^{\frac{1}{2}-\frac{q}{4}}\right)$. 

  Combining \eqref{ap7} and these results, we have
  \begin{align*}
    \|\hat \theta - \theta^*\|_F^2 \leq C_u \left( \sqrt{\frac{s \log \frac{em}{s}}{n}}+\sqrt{s R_q}(\frac{\log d}{n})^{\frac{1}{2}-\frac{q}{4}}\right),
  \end{align*}
  with probability at least $1-C_3\exp \{-C_4n (\frac{s \log \frac{em}{s}}{n}+ s R_q(\frac{\sigma^2v }{4n}\log d)^{1-\frac{q}{2}})\}$, which completes the proof of \eqref{eq:u2}.
  
  \QEDB
\end{proof}
\subsection{Proof of Theorem \ref{th4}}
\subsubsection{Lower bound for $\ell_{u}(\ell_{q})$}

\begin{proof}
    On the basis of double $\ell_0$ and $\ell_0$-$\ell_q$ pattern, we need two parameters to be determined, i.e., the group sparsity $1\le s \le m$ and the within-group sparsity $1\le s_0 \le d$. The use of $s$ and $s_0$ is the same as the previous proofs. We first construct the subspace of $\Theta^{m,d}_0 (s, s_0)$ similar to Lemma \ref{lem2}. Then, for each element of this subspace, we set the absolute value of the non-zero entries equal to $\delta$. 
    Consequently,
$$
s(s_0 \delta^{q})^{\frac{u}{q}} = R\implies \delta = R^{\frac{1}{u}}s^{-\frac{1}{u}}s_0^{-\frac{1}{q}}.
$$
By the generalized Fano's inequality, we have
\begin{equation*}
	\inf_{\hat \theta} \sup_{\theta^* \in \Theta^{m,d}_{u,q}(R)}P\left(\|\hat{\theta}-\theta^*\|_F^2 \ge \frac{1}{4}R^{\frac{2}{u}}s^{1-\frac{2}{u}}s_0^{1-\frac{2}{q}}\right) \geq 1 - \frac{\frac{n}{\sigma^2}R^{\frac{2}{u}}s^{1-\frac{2}{u}}s_0^{1-\frac{2}{q}}+\log 2}{\frac{1}{4}(ss_0\log\frac{d}{s_0}+s\log\frac{m}{s})},
\end{equation*}
Ignoring the constant factors, the above problem can be solved by linear programming: Let $y = \log s,\ x = \log s_0$, our target is maximizing 
$$
w = (1-\frac{2}{u})y+ (1-\frac{2}{q})x,
$$
and the  feasible region is given by:
$$
\left\{\begin{array}{ll}
	0 \le x \le \log d\\
	0 \le y \le \log m \\
 y \ge \min\left\{-\frac{u}{q}x  +\log R + \frac{u}{2}(\log n - \log(\log d)), 	- (\frac{u}{q}-\frac{u}{2})x+ \log R + \frac{u}{2}(\log n - \log(\log d))\right\}.
\end{array}\right.
$$

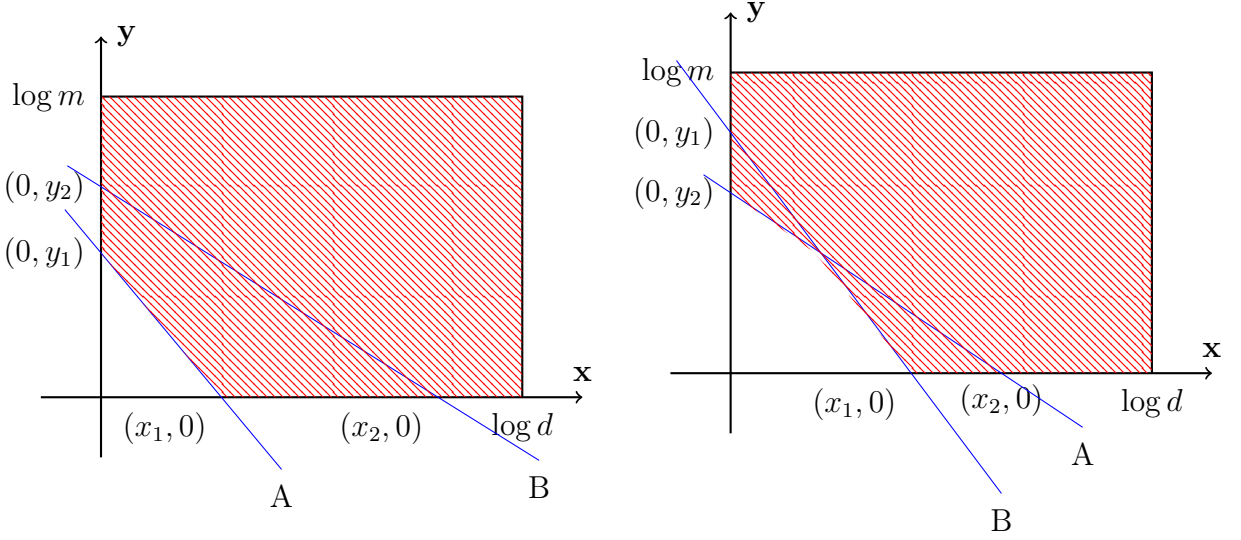
\begin{figure}
  \centering
\begin{minipage}{0.45\textwidth}
\begin{tikzpicture}[scale = 0.8]
%坐标轴
\coordinate (X) at (8,0);\coordinate (Y) at (0,6);%坐标轴上限
\draw[-to,thick] (-1,0) to (X);
\node[above=2] at (X) {$\mathbf{x}$};
\draw[-to,thick] (0,-1) to (Y);
\node[right=2] at (Y) {$\mathbf{y}$};
%方块体系
\coordinate (UR) at (7,5);\coordinate (LR) at (7,0);\coordinate (UL) at (0,5);%方块端点
\draw[thick] (LR) -- (UR) -- (UL);
\node[left=2,] at (UL) {$\log m$};
\node[below=2] at (LR) {$\log d$};
%划线坐标
\coordinate (x1) at (2,0);\node[below left=1.5] at (x1) {$(x_1,0)$};
\coordinate (x2) at (5.6,0);\node[below left=1.5] at (x2) {$(x_2,0)$};
\coordinate (y2) at (0,2.4);\node[left=2] at (y2) {$(0,y_1)$};
\coordinate (y1) at (0,3.5);\node[left=2] at (y1) {$(0,y_2)$};
%划线
\draw[color=blue] ($1.3*(y2)-0.3*(x1)$) -- ($1.5*(x1)-0.5*(y2)$);\node[below=2] at ($1.5*(x1)-0.5*(y2)$) {A};
\draw[color=blue] ($1.1*(y1)-0.1*(x2)$) -- ($1.3*(x2)-0.3*(y1)$);\node[below=2] at ($1.3*(x2)-0.3*(y1)$) {B};
%填充
\fill[pattern=north west lines, pattern color=red] (y2) -- (x1) -- (LR) -- (UR) -- (UL) -- (y2);
\end{tikzpicture}
\end{minipage}
\hspace{0.05\textwidth}
\begin{minipage}{0.45\textwidth}
\begin{tikzpicture}[scale = 0.8]
%坐标轴
\coordinate (X) at (8,0);\coordinate (Y) at (0,6);%坐标轴上限
\draw[-to,thick] (-1,0) to (X);
\node[above=2] at (X) {$\mathbf{x}$};
\draw[-to,thick] (0,-1) to (Y);
\node[right=2] at (Y) {$\mathbf{y}$};
%方块体系
\coordinate (UR) at (7,5);\coordinate (LR) at (7,0);\coordinate (UL) at (0,5);%方块端点
\draw[thick] (LR) -- (UR) -- (UL);
\node[left=2,] at (UL) {$\log m$};
\node[below=2] at (LR) {$\log d$};
%划线坐标
\coordinate (x2) at (4.5,0);\node[below] at (x2) {$(x_2,0)$};
\coordinate (x1) at (3,0);\node[below left=1.5] at (x1) {$(x_1,0)$};
\coordinate (y2) at (0,3);\node[left=2] at (y2) {$(0,y_2)$};
\coordinate (y1) at (0,4);\node[left=2] at (y1) {$(0,y_1)$};
%划线
\draw[color=blue] ($1.3*(y1)-0.3*(x1)$) -- ($1.5*(x1)-0.5*(y1)$);\node[below=2] at ($1.5*(x1)-0.5*(y1)$) {B};
\draw[color=blue] ($1.1*(y2)-0.1*(x2)$) -- ($1.3*(x2)-0.3*(y2)$);\node[below=2] at ($1.3*(x2)-0.3*(y2)$) {A};
%填充
\fill[pattern=north west lines, pattern color=red] (y2) -- (1.4, 2) -- (x1) -- (LR) -- (UR) -- (UL) -- (y2);
\end{tikzpicture}
  \end{minipage}  
  \caption{Plot of the feasible regions. The red region represents the feasible region, and the blue lines represent the Line A and Line B, respectively. 
  Line A intersects the x-axis and y-axis at $(x_1, 0)$ and $(0, y_1)$, respectively, and Line B intersects the x-axis and y-axis at $(x_2, 0)$ and $(0, y_2)$, respectively.}
\label{fig:f7}
\end{figure}
To simplify the notations, we denote $x_1$ as $\log R + \frac{u}{2}\log(\frac{n}{\log m})$, $x_2$ as
$\log R + \frac{u}{2}\log(\frac{n}{\log d})$, $y_1$ as $\frac{q}{u}\log R + \frac{q}{2}\log(\frac{n}{\log m})$ and $y_2$ as $\frac{q}{u}\log R + \frac{q}{2}\log(\frac{n}{\log d})$.
To provide a clearer explanation of the problem, we present the possible feasible regions in Figure \ref{fig:f7}.
It is worth noting that the slope of Line A is calculated as $|\frac{\frac{2}{q}}{\frac{2}{u}}|$, while the slope of Line B is determined by $|\frac{\frac{2}{q}-1}{\frac{2}{u}}|$.
Consequently, the slope of Line A is greater than the slope of Line B.
On the other hand, the slope of the objective function is given by $|\frac{\frac{2}{q}-1}{\frac{2}{u}-1}|$, which is larger than the slopes of both Line A and Line B when $u > q$. However, when $u \leq q$, the slope of the objective function is smaller than that of Line A but greater than that of Line B.
Therefore, maximum point for $w$ is chosen from $(0,\log R + \frac{u}{2}\log(\frac{n}{\log d})),(0,\log R + \frac{u}{2}\log(\frac{n}{\log m})),(\frac{q}{u}\log R + \frac{q}{2}\log(\frac{n}{\log d}),0)$.

So that the minimax lower bound is given by:
$$
\left\{\begin{array}{ll}
	R\left(\frac{n}{\log d}\right)^{\frac{u-2}{2}}+ R\left(\frac{n}{\log m}\right)^{\frac{u-2}{2}} & u > q\\
	R^{\frac{q}{u}}\left(\frac{n}{\log d}\right)^{\frac{q-2}{2}}+R\left(\frac{n}{\log m}\right)^{\frac{u-2}{2}} & u \le q,~ m \ge d\\
	R^{\frac{q}{u}}\left(\frac{n}{\log d}\right)^{\frac{q-2}{2}}  & u \le q,~ m \le d
\end{array}\right.
$$
\QEDB
\end{proof}
\subsubsection{Upper bound for $\ell_{u}(\ell_{q})$}

In order to obtain the covering number of $\mathbb{B}(R)$ which is equipped with $\ell_{u}(\ell_{q})$-norm, we need a generalization of Sch{\"u}tt’s theorem for vector-valued sequence spaces \cite{edmunds2014schutt}.

\begin{lemma}[Theorems 3.1 and 3.2 in \cite{edmunds2014schutt}]
	Let $X, Y$ be r-normed quasi-Banach spaces, and $0 < q < r \le \infty$, unit ball $\mathbb{B}_{\ell_q^m(X)}$ denotes
	$$
	\mathbb{B}_{\ell_q^m(X)} = v_1\mathbb{B}_X \times v_2\mathbb{B}_X \times \cdots v_m \mathbb{B}_X,
	$$ 
where $\mathbb{B}_X$ is unit ball with $X$-norm, and $m$-dimensional vector $v$ belongs to unit ball $\mathbb{B}_{q}$. For $k, k_0 \in \mathbb{N}$ such that $k_0 \le k$, let
$$
D(k_0,k) = \max_{l\in \mathbb{N}, k_0 \le l \le k}\left(\frac{l}{k}\right)^{\frac{1}{q}-\frac{1}{r}}e_{l}(id: X\to Y),
$$
and
$$
A(k,m) = \max\left\{\|id: X\to Y\|\left(\frac{\log(em/k) }{k}\right)^{\frac{1}{q}-\frac{1}{r}} ,D(1,k)\right\},
$$
where $\|id: X\to Y\|$ denotes the operator norm between $X$ and $Y$ and $e_{l}(id: X\to Y)$ denotes entropy number. For $k \ge \log_2(m)$, we have the following,
\begin{itemize}
	\item If $k \le m$, then
	$$
	e_k\left(id: \ell_q^m(X) \to \ell_r^m(Y)\right)\simeq A(k,m).
	$$
	\item If $k \ge m$, then there are absolute constants $C_1, C_2$ such that
	$$
	D(C_1k/m, k) \le e_k\left(id: \ell_q^m(X)\to \ell_r^m(Y)\right) \le D(C_2k/m, k).
	$$
\end{itemize}
\end{lemma}

Let $q = u$, $r = 2$, $X = \ell_{q}^d$, or $Y = \ell_{2}^d$ for our regression problem, so that $\|id: X\to Y\| = 1$ and $e_{l}(id: X\to Y)$ is given by \ref{wzero}, \ref{high} and \ref{low}. We also inherit the spirit of \cite{raskutti2011minimax} by only considering $ \log m < k < m$ and $\log d \le k \le d$, just like we did for $\ell_0(\ell_q)$ case, we will present it is reasonable for condition \eqref{lqqcondition}. Therefore it can be divided into three cases:
\begin{enumerate}
	\item[\bf{Case(a)}:] for  $u\le q, ~ m \le d$, we assume $\log(md)\le k \le d$, then
	$$
	e_k \simeq 	\left(\frac{\log(ed)}{k}\right)^{\frac{1}{q}-\frac{1}{2}}.
	$$
\item[\bf{Case(b)}:] for  $u\le q, ~ m \ge d$, we assume $\log(md)\le k \le m$, then
	$$
	e_k \simeq \left\{\begin{array}{ll}
	\max\left\{\left(\frac{\log(em)}{k}\right)^{\frac{1}{u}-\frac{1}{2}},\left(\frac{\log (ed)}{k}\right)^{\frac{1}{q}-\frac{1}{2}}\right\}, & \log(md) \le k \le d\\
	\max\left\{\left(\frac{\log(em)}{k}\right)^{\frac{1}{u}-\frac{1}{2}},\left(\frac{d}{k}\right)^{\frac{1}{u}-\frac{1}{2}} d^{\frac{1}{2}-\frac{1}{q}}\right\}, & d\le k \le m.
\end{array}\right.
	$$
Notably, when $k \ge d$, we observe that $(\log d)^{\frac{1}{q}-\frac{1}{2}}\cdot (\frac{k}{d})^{\frac{1}{u}-\frac{1}{q}}>1$, then we have
$$
e_k \le \max\left\{\left(\frac{\log(em)}{k}\right)^{\frac{1}{u}-\frac{1}{2}},\left(\frac{\log (ed)}{k}\right)^{\frac{1}{q}-\frac{1}{2}}\right\} ~ \text{for}~ \log(md) \le k \le m.
$$
\item[\bf{Case(c)}:] for $u > q$,
	$$
e_k \simeq \left\{\begin{array}{ll}
\left(\frac{\log (emd)}{k}\right)^{\frac{1}{u}-\frac{1}{2}} & \log(md) \le k \le m\log d\\
m^{\frac{1}{q}-\frac{1}{u}}\left(\frac{\log (emd)}{k}\right)^{\frac{1}{q}-\frac{1}{2}} & m\log d\le k \le md.
\end{array}\right.
$$
When $u > q$ and $k \ge m\log d$. We assume $k \ge m\log(emd)$, then we have
$$
\left(\frac{\log (emd)}{k}\right)^{\frac{1}{u}-\frac{1}{2}} \ge m^{\frac{1}{q}-\frac{1}{u}}\left(\frac{\log (emd)}{k}\right)^{\frac{1}{q}-\frac{1}{2}}.
$$
Therefore, we have
$$
e_k \le \left(\frac{\log (emd)}{k}\right)^{\frac{1}{u}-\frac{1}{2}} \text{, for}\  \log(md) \le k \le \max\{d,m\}.
$$
\end{enumerate}

Without loss of generality, we assume that for any $\epsilon>0$, $\log(md) \le d^\epsilon$ and $\log(md) \le m^{\epsilon}$. In conclusion, for given $\log(md) \le k \le \max\{d,m\} $ we have
\begin{equation}\label{lqcovering}
	e_k \le \left\{\begin{array}{ll}
		\left(\frac{\log (ed)}{k}\right)^{\frac{1}{q}-\frac{1}{2}} & u\le q, ~ m \le d\\
		\max\left\{\left(\frac{\log(ed)}{k}\right)^{\frac{1}{q}-\frac{1}{2}},\left(\frac{\log (em)}{k}\right)^{\frac{1}{u}-\frac{1}{2}}\right\} & u\le q, ~ m \ge d\\
		\left(\frac{\log (emd)}{k}\right)^{\frac{1}{u}-\frac{1}{2}} & q\le u.
	\end{array}\right.
\end{equation}
Notably, this rate is tight for $\log(md) \le k \le \min\{d,m\}$.
Now we follow the same procedure as $\ell_{0}(\ell_q)$ case. Define 
$$
\widetilde{\mathcal{S}}(\Theta^{m,d}_{u,q}(R), r) = \left\{\theta \in \mathbb{R}^{d\times m}:\|\theta\|_F \le r\right\}\cap \Theta^{m,d}_{u,q}(R).
$$

\begin{proof}
	
 For case $u \le q , m \ge d$, we add radius $R^{\frac{1}{u}}$ into \eqref{lqcovering} and obtain
	 $$
	 \epsilon \le R^{\frac{1}{u}}\left(\frac{\log(ed)}{k}\right)^{\frac{1}{q}-\frac{1}{2}}+R^{\frac{1}{u}}\left(\frac{\log (em)}{k}\right)^{\frac{1}{u}-\frac{1}{2}}.
	 $$
By solving the above inequality, we have
\begin{equation}\label{lqqcovering}
	\log N\left(\epsilon ; \widetilde{\mathcal{S}}(\Theta^{m,d}_{u,q}(R), r)\right)\le  \log (ed)(\epsilon^{-1}R^{\frac{1}{u}})^{\frac{2q}{2-q}} + 
	\log (em)(\epsilon^{-1}R^{\frac{1}{u}})^{\frac{2u}{2-u}}.
\end{equation}
	 
	Next,  we follow the Lemma 3.2 of \cite{geer2000empirical} by constructing  constants $(\delta,r)$ that satisfies
	 \begin{equation}\label{aq8}
	 	\sqrt{n} \delta \geq C_{1} r \\
	 \end{equation}
	 and
	 \begin{align}\label{aq9}
	 	\begin{split}
	 		C_{2}\sqrt{n} \delta \geq  J(r, \delta).
	 	\end{split}
	 \end{align}
Note that
\begin{align*}
  J(r, \delta) &= \int_{\frac{\delta}{16}}^{r} \sqrt{\log N\left(t ; \widetilde{\mathcal{S}}(\Theta^{m,d}_{u,q}(R), r)\right)}\ d t\\
  &\leq \int_{0}^{r} \sqrt{ \log (ed)(t^{-1}R^{\frac{1}{u}})^{\frac{2q}{2-q}} +\log (em)(t^{-1}R^{\frac{1}{u}})^{\frac{2u}{2-u}}}\ d t\\
  &\leq \sqrt{\log(ed)}R^{\frac{q}{u(2-q)}}r^{1-\frac{q}{2-q}} + \sqrt{\log(em)}R^{\frac{1}{2-u}}r^{1-\frac{u}{2-u}} .
\end{align*}
First, we set 
\begin{equation}\label{lqqr}
	r = \Omega\left(	R^{\frac{q}{2u}}\left(\frac{n}{\log d}\right)^{\frac{q-2}{4}}+R^{\frac{1}{2}}\left(\frac{n}{\log m}\right)^{\frac{u-2}{4}}\right)\wedge R^{\frac{1}{u}}
\end{equation}
and 
\begin{equation}\label{lqqdelta}
	\delta = Cr\left(	R^{\frac{q}{2u}}\left(\frac{n}{\log d}\right)^{\frac{q-2}{4}}+R^{\frac{1}{2}}\left(\frac{n}{\log m}\right)^{\frac{u-2}{4}}\right).
\end{equation}
Next, we  clarify that $(\delta,r)$ satisfies conditions $\eqref{aq8}$ and $\eqref{aq9}$. For $\eqref{aq8}$, we have
$$
\left(\frac{\sqrt{n}\delta}{r}\right) \ge C\left(R^{\frac{q}{u}}n^{\frac{q}{2}}\left(\frac{1}{\log d}\right)^{\frac{q-2}{2}}+Rn^{\frac{u}{2}}\left(\frac{1}{\log m}\right)^{\frac{u-2}{2}}\right) \ge C_1.
$$
The second inequality is followed by condition \eqref{lqqcondition}.
For $\eqref{aq9}$, we have
$$
\begin{aligned}
	\frac{J(r, \delta)}{\sqrt{n}\delta} = \frac{\sqrt{\log(ed)}R^{\frac{q}{u(2-q)}}r^{-\frac{q}{2-q}} + \sqrt{2\log(m)}R^{\frac{1}{2-u}}r^{-\frac{u}{2-u}} }{C\sqrt{n}\left(	R^{\frac{q}{2u}}\left(\frac{n}{\log d}\right)^{\frac{q-2}{4}}+R^{\frac{1}{2}}\left(\frac{n}{\log m}\right)^{\frac{u-2}{4}}\right)}.
\end{aligned}
$$
For $r = \Omega\left(	R^{\frac{q}{2u}}\left(\frac{n}{\log d}\right)^{\frac{q-2}{4}}+R^{\frac{1}{2}}\left(\frac{n}{\log m}\right)^{\frac{u-2}{4}}\right)$, we have

$$
\begin{aligned}
	\frac{J(r, \delta)}{\sqrt{n}\delta} &\le \frac{\sqrt{2\log(d)}R^{\frac{q}{u(2-q)}}[R^{\frac{q}{2u}}\left(\frac{n}{\log d}\right)^{\frac{q-2}{4}}]^{-\frac{q}{2-q}} + \sqrt{2\log(m)}R^{\frac{1}{2-u}}[R^{\frac{1}{2}}\left(\frac{n}{\log m}\right)^{\frac{u-2}{4}}]^{-\frac{u}{2-u}}}{C\left(	R^{\frac{q}{2u}}n^{\frac{q}{4}}\left(\frac{1}{\log d}\right)^{\frac{q-2}{4}}+R^{\frac{1}{2}}n^{\frac{u}{4}}\left(\frac{1}{\log m}\right)^{\frac{u-2}{4}}\right)}\\
	&=\frac{\sqrt{2}\sqrt{\log d}R^{\frac{q}{2u}}\left(\frac{n}{\log d}\right)^{\frac{q}{4}}+\sqrt{2}\sqrt{\log m}R^{\frac{1}{2}}\left(\frac{n}{\log m}\right)^{\frac{u}{4}}}{C\left(	R^{\frac{q}{2u}}n^{\frac{q}{4}}\left(\frac{1}{\log d}\right)^{\frac{q-2}{4}}+R^{\frac{1}{2}}n^{\frac{u}{4}}\left(\frac{1}{\log m}\right)^{\frac{u-2}{4}}\right)}\\
	&=\frac{\sqrt{2}}{C}.
\end{aligned}
$$
We end the procedure by clarifying $(\frac{\delta}{16}, r)$ make sense.
Following $r \ge R^{\frac{q}{2u}}\left(\frac{n}{\log d}\right)^{\frac{q-2}{4}}+R^{\frac{1}{2}}\left(\frac{n}{\log m}\right)^{\frac{u-2}{4}}$
and \eqref{lqqcovering}, we have:
$$
\begin{aligned}
	\log N\left(\delta ; \widetilde{\mathcal{S}}(\Theta^{m,d}_{u,q}(R), r)\right)&\le  \log (ed)\left(R^{-\frac{q}{2u}}\left(\frac{n}{\log d}\right)^{-\frac{q-2}{4}}R^{\frac{1}{u}}\right)^{\frac{2q}{2-q}} \\
	&+ 
\log (em)\left(R^{-\frac{1}{2}}\left(\frac{n}{\log m}\right)^{-\frac{u-2}{4}}R^{\frac{1}{u}}\right)^{\frac{2u}{2-u}}\\
&\le \log (ed)R^{\frac{q}{u}}(\frac{n}{\log})^{\frac{q}{2}}+\log (em)R(\frac{n}{\log})^{\frac{u}{2}}\\
&\le d+ m .
\end{aligned}
$$
The last inequality is followed by \eqref{lqqcondition}. 
Combined with $r < R^{\frac{1}{u}}$, we derive $\log N\left(\delta ; \widetilde{\mathcal{S}}(\Theta^{m,d}_{u,q}(R), r)\right) \ge \log(md)$ for $\delta < r$, and we have
$$
\left\{\log N\left(\delta\right): \delta \in  (\frac{\delta}{16}, r)\right\} \cap (\log (md),d+m] \ne \emptyset.
$$
Therefore, with probability at least $1-C_5 \exp\left\{-C_6 n \left(	R^{\frac{q}{u}}\left(\frac{n}{\log d}\right)^{\frac{q-2}{2}}+R\left(\frac{n}{\log m}\right)^{\frac{u-2}{2}}\right)\right\}$, we have
\begin{equation}\label{entropylqq1}
		 \begin{aligned}
		\sup_{\theta \in \widetilde{\mathcal{S}}(\Theta^{m,d}_{u,q}(R), r)} |\langle \bar Z, \theta \rangle| \leq \left(	R^{\frac{q}{2u}}\left(\frac{n}{\log d}\right)^{\frac{q-2}{4}}+R^{\frac{1}{2}}\left(\frac{n}{\log m}\right)^{\frac{u-2}{4}}\right)r.
	\end{aligned} 
\end{equation}
The other two cases that $u > q$ and $u \le q, m\le d$ are followed by the same procedure so we just give a concise sketch.

For case $u \le q , m \le d$,
 we set 
 $$
 r = \Omega\left(R^{\frac{q}{2u}}\left(\frac{n}{\log d}\right)^{\frac{q-2}{4}}\right)\wedge R^{\frac{1}{u}}
 $$
 and 
$$
\delta = C_1 r \cdot R^{\frac{q}{2u}}\left(\frac{n}{\log d}\right)^{\frac{q-2}{4}}.
$$
Therefore, with probability at least $1-C_7 \exp\left\{-C_8 n	R^{\frac{q}{u}}\left(\frac{n}{\log d}\right)^{\frac{q-2}{2}}\right\}$,
\begin{equation}\label{entropylqq2}
	\begin{aligned}
		\sup_{\theta \in \widetilde{\mathcal{S}}(\Theta^{m,d}_{u,q}(R), r)} |\langle \bar Z, \theta \rangle| \leq 	R^{\frac{q}{2u}}\left(\frac{n}{\log d}\right)^{\frac{q-2}{4}}r.
	\end{aligned} 
\end{equation}
For case $u \le q , m \le d$, 
we set 
$$
r = \Omega\left(R^{\frac{1}{2}}\left(\frac{n}{\log md}\right)^{\frac{u-2}{4}}\right)\wedge R^{\frac{1}{u}}
$$
and 
$$
\delta = C_1 r \cdot R^{\frac{1}{2}}\left(\frac{n}{\log md}\right)^{\frac{u-2}{4}}
$$
Therefore, with probability at least $1-C_2 \exp\left\{-C_{3} n 	R\left(\frac{n}{\log md}\right)^{\frac{u-2}{2}}\right\}$,
\begin{equation}\label{entropylqq3}
	\begin{aligned}
		\sup_{\theta \in \widetilde{\mathcal{S}}(\Theta^{m,d}_{u,q}(R), r)} |\langle \bar Z, \theta \rangle| \leq 	R^{\frac{1}{2}}\left(\frac{n}{\log d}\right)^{\frac{q_-2}{4}}r.
	\end{aligned} 
\end{equation}
We end the proof by plugging the concentration inequalities \eqref{entropylqq1}, \eqref{entropylqq2} and \eqref{entropylqq3} into the Lemma \ref{peeling}.
\QEDB
\end{proof}

\subsection{Proof of Corollary 1}
 Minimax rate for double sparse linear regression is a direct consequence of the Gaussian location model with the assumption on the design matrix. 
 We prove the lower bound and upper bound in the following part, respectively.
 \begin{itemize}
  \item[(a)] Lower bound: by Assumption \ref{ass1}(a), we derive the upper bound for the mutual inforamtion:
 $$
 I(Y;\psi)\le \frac{n}{2\sigma^2}\parallel X(\beta^i-\beta^j) \parallel^2_2 \le \frac{n}{\sigma^2}ss_0\tau_u^2 \delta'^2.
 $$ 
 The remaining proof is the same as the proof \ref{proof_lower} with setting $\delta = \tau_u \delta'$.
\item[(b)] Upper bound: We denote $\hat{\Delta} = \hat{\beta} - \beta^*$.
  Recall that the definition of the constrained maximum likelihood estimator:
\begin{align}
  \begin{split}
 \hat \beta \in \arg \min_{\beta \in \mathcal{B}^{m,d}_0(s, s_0)} \|Y-X\beta\|^2_2.
  \end{split}
\end{align}
We obtain $\hat{\Delta} \in \mathcal{B}^{m,d}_{0}(2 s, 2s_0)$. Consequently, by Assumption \ref{ass1}(b), we have
\begin{align*}
  \|\hat{\Delta}\|^2 &\le \frac{1}{\tau_{\ell}^2}\|X\hat{\beta} - X\beta \|^2_2\le \sup_{\hat{\Delta} \in \mathcal{B}^{m,d}_{0}(2 s, 2s_0)}\langle\xi,X^T\hat{\Delta}\rangle.
  \end{align*}
The remaining proof is the same as proof \ref{proof4} by using Lemma \ref{covering}.

% When $q\in (0,1]$, we restrict our discussion under the condition that $\|\hat{\Delta}\|^2 = \Omega(h_{\ell}\left(R_{q}, n, s, d\right))$. By Restricted Eigenvalue Condition \ref{ass2}, we conclude the similar inequality:
% \begin{align*}
%   \|\hat{\Delta}\|^2 &\le \frac{1}{\tau_{q,\ell}^2}\|X\hat{\beta} - X\beta \|^2_2\le \sup_{\Pi(\hat{\Delta}) \in \Theta^{m,d}_q(2s, 2R_q)}\langle\xi,X^T\hat{\Delta}\rangle
% \end{align*}
% The remaining proof is the same as proof \ref{proof4} Case (b).
 \end{itemize}

\subsection{Proof of Theorem 3}\label{proof3}

The proof of Lemma \ref{lem:dsrip} is a straightforward extension of the proof presented in Lemma 2 of \cite{ndaoud2020scaled}. For brevity, we omit the detailed procedure here.
\begin{lemma}[Lemma 2 of \cite{ndaoud2020scaled}]\label{lem5}
  For a given subset $S \in \mathcal{S}^{m, d}(s, s_0)$, we have
  % \begin{align*}
  % &P\left(\frac{1}{n}\sum_{(i,j)\in S}(X_{.,ij}^\top \xi)^2 \ge \sigma^2(ss_0+t)\right)\\
  % \le& e^{-(t/8)\wedge(t^2/(64ss_0))},\ \forall t \ge 0.
  % \end{align*}
  \begin{align*}
    P\left(\frac{1}{n}\|X_{S}^\top \xi\|_2^2 \ge \sigma^2(ss_0+t)\right) \leq e^{-(t/8)\wedge(t^2/(64ss_0))},\ \forall t \ge 0.
    \end{align*}
\end{lemma}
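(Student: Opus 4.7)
The plan is to recognize the statistic as a quadratic form in $\xi$ and apply a Laurent--Massart / Hanson--Wright style deviation bound. Set $u_{ij} := X_{\cdot,ij}/\sqrt n$, which is a unit vector by the column normalization $\|X_{\cdot,ij}\|_2 = \sqrt n$. Stack these as rows of $U \in \mathbb{R}^{|S| \times n}$ and define the PSD matrix $A := U^\top U$. Then
\[
\frac{1}{n}\sum_{(i,j)\in S}(X_{\cdot,ij}^\top \xi)^2 \;=\; \|U\xi\|_2^2 \;=\; \xi^\top A \xi,
\]
and $\operatorname{tr}(A) = \sum_{(i,j)\in S}\|u_{ij}\|_2^2 = |S| \le ss_0$. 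Invoking the DSRIP hypothesis of Definition \ref{df2} together with the column normalization gives $\|A\|_{\mathrm{op}} = \lambda_{\max}(UU^\top) \le 1$, whence $\|A\|_F^2 \le \|A\|_{\mathrm{op}}\operatorname{tr}(A) \le ss_0$.

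Next I would establish the centered-MGF bound. Diagonalizing $A$ and using $-\tfrac12\log(1-x) - \tfrac{x}{2} \le x^2/2$ for $|x|\le 1/2$ yields, in the Gaussian regime and for $|s|\le (4\sigma^2 \|A\|_{\mathrm{op}})^{-1}$,
\[
\log \mathbb{E}\exp\bigl\{ s(\xi^\top A\xi - \sigma^2 \operatorname{tr}(A))\bigr\} \;\le\; 2 s^2 \sigma^4 \|A\|_F^2.
\]
The sub-Gaussian case goes through either by decoupling or by directly quoting the Hanson--Wright inequality, at the cost of at most absolute constants in the bound.

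A two-regime Chernoff optimization in $s$ then gives
\[
P\bigl(\xi^\top A\xi - \sigma^2 \operatorname{tr}(A) \ge \sigma^2 t\bigr) \;\le\; \exp\!\Bigl(-\min\bigl\{\,t^2/(8\|A\|_F^2),\ t/(8\|A\|_{\mathrm{op}})\bigr\}\Bigr).
\]
Inserting $\|A\|_F^2 \le ss_0$ and $\|A\|_{\mathrm{op}} \le 1$, and noting that $\sigma^2(ss_0 + t) \ge \sigma^2\operatorname{tr}(A) + \sigma^2 t$ because $|S|\le ss_0$, collapses the right-hand side to an exponent no worse than the advertised $(t/8)\wedge (t^2/(64\,ss_0))$; the factor-$8$ slack in the quadratic branch safely absorbs the gap $ss_0 - |S|$ in the centering as well as the sub-Gaussian-vs-Gaussian constant discrepancy.

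The main obstacle is controlling $\|A\|_{\mathrm{op}}$: without a structural assumption on the columns $\{X_{\cdot,ij}\}_{(i,j)\in S}$, the operator norm of the Gram sub-matrix can be as large as $|S|$, which would degrade the quadratic branch of the exponent. This is exactly what the DSRIP condition rules out, uniformly over double-sparse supports, and is the only non-routine input to the argument; the remainder is the standard Chernoff bookkeeping of the Laurent--Massart template.
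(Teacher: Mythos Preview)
The paper does not prove this lemma; it simply cites it as Lemma~2 of \cite{ndaoud2020scaled} and uses it as a black box in the union-bound derivation of Lemma~\ref{lemma:iht1}. Your Hanson--Wright/Laurent--Massart route is precisely the argument that underlies the cited result, and your identification of $\|A\|_{\mathrm{op}}$ as the only non-routine input is accurate: without eigenvalue control on the Gram sub-matrix the sub-exponential branch of the tail would degrade from $t/8$ to $t/(c\,|S|)$, and the lemma would be false as stated.

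One point to tighten. The DSRIP of Definition~\ref{df2} constrains only the \emph{ratio} $\delta_S = 1 - L_S/U_S$, not the absolute size of $\lambda_{\max}(X_S^\top X_S)/n$, so it does not by itself deliver $\|A\|_{\mathrm{op}} \le 1$. In the cited reference the restricted isometry is stated in the standard form $(1-\delta)\|\beta\|_2^2 \le \|X_S\beta\|_2^2/n \le (1+\delta)\|\beta\|_2^2$, which does give $\|A\|_{\mathrm{op}} \le 1+\delta$; you should either appeal to that formulation explicitly or carry the factor $U_S/n$ through the constants. Relatedly, DSRIP applies only to supports $S \in \mathcal{S}^{m,d}(s,s_0)$, not to arbitrary $S$ with $|S|\le ss_0$ as the lemma's hypothesis literally allows; since the paper only ever invokes the lemma on double-sparse supports this discrepancy is harmless in context, but your write-up should state the restriction on $S$ so that the DSRIP hypothesis actually bites.
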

% Lemma \ref{lem5} can be proved similarly as $\cite{ndaoud2020scaled}$.
Based on Lemma \ref{lem5}, Lemma \ref{lemma:iht1} is proved by union  bound and calculating that:
$$
|\mathcal{S}^{m,d}(s,s_0)| \le {m \choose s}\left\{{d \choose s_0}\right\}^s.
$$
\begin{proof}
  We proceed with the proof of Theorem 3 by induction. 
  The results are trivial for step $t = 0$. 
  Now we assume \eqref{eq:iht11} - \eqref{eq:iht4} hold for step $t$. Then, based on these assumptions, we prove that similar results hold for step $t+1$. 
  We first prove \eqref{eq:iht11} and \eqref{eq:iht3} by contradiction. 
  Assume that \eqref{eq:iht11} and \eqref{eq:iht3} are wrong for $t+1$, i.e., $(\mathop{\cup}_{j \in G^*})\cap S^{t+1} \cap (S^*)^c \notin \mathcal{S}^{m, d}(s, s_0)$ and $(\mathop{\cup}_{j \in G^*})^c\cap S^{t+1} \notin \mathcal{S}^{m, d}(s, s_0)$. 
  According to the procedure of our algorithm, we have
  $$
  \beta^{t+1} = \mathcal{T}_{\lambda_{t+1}, s ,s_0}(H^{t+1}).
  $$
  
  {\bf Proof of \eqref{eq:iht11} for step $t+1$:}
  We first prove \eqref{eq:iht11} by contradiction. 
  % Recall the definition of $\mathcal{T}_{\lambda_{t+1}}$ operator, and begin with the $\mathcal{T}_{\lambda_{t+1}}(H^{t+1})$ at the first and second steps of the operator.
  Assume that \eqref{eq:iht11} is wrong for $t+1$, which implies that when we consider the support $(\mathop{\cup}\limits_{j \in G^*} G_j) \cap(S^*)^c$, there exists a $(s, s_0)$-shape set $S_{1, t+1}$ satisfying that  
 $$
  ss_0 \lambda_{t+1}^2 \le \sum_{i \in S_{1, t+1}}\{\mathcal{T}_{\lambda_{t+1}}(H^{t+1})\}^2_i.
  $$
  This statement has been elaborated in {\bf{Case 1}}. 
  Since $\beta^*_i = 0$ for $i \in S_{1, t+1}$. Then, from the triangle inequality, we have
  $$
  \sqrt{ss_0}\lambda_{t+1} \le \sqrt{\sum_{i \in S_{1, t+1}}\langle \Phi_{i}^\top , \beta^*-\beta^{t}\rangle^2} + \sqrt{\sum_{i\in S_{1, t+1}}\Xi_{i}^2}.
  $$
  Note that $\beta^*$ is $(s,s_0)$-sparse and $\eqref{eq:iht11}, \eqref{eq:iht3}$ hold for step $t$ by induction.
  Consequently, $\beta^*-\beta^t$ is $(2s, 2s_0)$-sparse.
  Then, with probability at least $1-\exp\left\{-C\left(s\log(\frac{em}{s})+ss_0\log(\frac{ed}{s_0})\right)\right\}$, we have
  $$
  \begin{aligned}
    \sqrt{ss_0}\lambda_{t+1} \le& \frac{3+\sqrt{2}+2\sqrt{3}}{2}\delta \sqrt{ss_0}\lambda_t + \sqrt{\frac{10\sigma^2\left(s\log(\frac{em}{s})+ss_0\log(\frac{ed}{s_0})\right)}{n}}\\
    \le& \frac{3+\sqrt{2}+2\sqrt{3}}{2}\sqrt{\delta} \sqrt{ss_0}\lambda_{t+1}+\frac{1}{2}\sqrt{ss_0}\lambda_{\infty}\\
    \le& \left(\frac{3+\sqrt{2}+2\sqrt{3}}{2}\sqrt{\delta}+\frac{1}{2}\right)\sqrt{ss_0}\lambda_{t+1}\\\
    <& \sqrt{ss_0}\lambda_{t+1}
  \end{aligned}
  $$
  as long as $\delta < \frac{1}{(3+\sqrt{2}+2\sqrt{3})^2}$, which is absurd. 
  Here the first inequality employs the DSRIP condition and the assumption that result \eqref{eq:iht4} holds for step $t$, while the second inequality arises from the fact that $\lambda_{\infty} \leq \lambda_{t}$.  Therefore, \eqref{eq:iht11} has been proved for step $t+1$ by induction.

  {\bf Proof of \eqref{eq:iht3} for step $t+1$:} \eqref{eq:iht3} can be proved in a similar way as \eqref{eq:iht11}.
   Assume that \eqref{eq:iht3} is wrong for $t+1$, which implies that when we consider the support $(\mathop{\cup}\limits_{j \in G^*} G_j)^c$, there exists a $(s, s_0)$-shape set $S_{2, t+1}$ satisfying that  
 $$
  ss_0 \lambda_{t+1}^2 \le \sum_{i \in S_{2, t+1}}\{\mathcal{T}_{\lambda_{t+1}}(H^{t+1})\}^2_i.
  $$
  This statement has been elaborated in {\bf{Case 1}}-{\bf{Case 3}}. Therefore, we have
  $$
  \sqrt{ss_0}\lambda_{t+1} \le \sqrt{\sum_{i \in S_{2, t+1}}\langle \Phi_{i}^\top , \beta^*-\beta^{t}\rangle^2} + \sqrt{\sum_{i\in S_{2, t+1}}\Xi_{i}^2}.
  $$
  The remaining analysis is the same as the proof of \eqref{eq:iht11}.

  {\bf Proof of \eqref{eq:iht4} for step $t+1$:}
  Note that $\eqref{eq:iht11}$ and $\eqref{eq:iht3}$ hold for step $t+1$.
  We get that $\beta^{t+1} - \beta^*$ is $(2s, 2s_0)$-sparse.
  Observe that for any $i \in [p]$, 
  \begin{equation}\label{eq:iht12}
  (\beta^{t+1})_i - \beta^*_i = -H^{t+1}_i \mathbb{I}(i \notin S^{t+1}) + \langle \Phi_{i}^\top, \beta^*-\beta^{t}\rangle + \Xi_i.
\end{equation}
On one hand, for $i \in S^{t+1}\cap (S^*)^c$, $\beta^*_i = 0$.
Summing both sides of \eqref{eq:iht12} over set $S^{t+1}\cap (S^*)^c$, we have 
\begin{align}\label{eq:iht8}
  \begin{split}
  \| \beta^{t+1}_{(S^*)^c}\|_2 \le& \sqrt{\sum_{i\in S^{t+1}\cap (S^*)^c}\langle \Phi_{i}^\top, \beta^*-\beta^{t}\rangle^2}+\sqrt{\sum_{i\in S^{t+1}\cap (S^*)^c}\Xi_{i}^2}\\
  \le& \delta \| \beta^t-\beta^* \| +\sqrt{\frac{20\sigma^2\left(s\log(\frac{em}{s})+ss_0\log(\frac{ed}{s_0})\right)}{n}}.
\end{split}
\end{align}
On the other hand, summing both sides of \eqref{eq:iht12} over support set $S^*$, we have 
\begin{equation}\label{eq:iht9}
  \begin{aligned}
    \| (\beta^{t+1}-\beta^*)_{S^*} \|_2 \le& \sqrt{\sum_{i\in \cap S^*}\{H^{t+1}_i\mathbb{I}(i \notin S^{t+1})\}^2}  +\sqrt{\sum_{i\in S^*}\langle \Phi_i^\top, \beta^*-\beta^{t}\rangle^2}+ \sqrt{\sum_{i\in S^*}\Xi_i^2}\\
    \le& \sqrt{3ss_0}\lambda_{t+1} + \delta\| \beta^*-\beta^{t}\| +\sqrt{\frac{10\sigma^2\left(s\log(\frac{em}{s})+ss_0\log(\frac{ed}{s_0})\right)}{n}}.
  \end{aligned}
\end{equation}
Since operator $\mathcal{T}_{\lambda, s_0}(\cdot)$ has three thresholding procedure, i.e., element-wise thresholding and group thresholding with two respects, the term 
$\sum_{i\in S^*}(H^{t+1}_i)^2$ in \eqref{eq:iht9} is upper bounded by $3ss_0\lambda^2_{t+1}$
Combining \eqref{eq:iht8} and \eqref{eq:iht9}, we conclude that
$$
\begin{aligned}
  \|\beta^{t+1}-\beta^* \|_2 
  &\le \| \beta^{t+1}_{(S^*)^c}\|_2+\| (\beta^{t+1}-\beta^*)_{S^*} \|_2\\
  &\le \sqrt{3ss_0}\lambda_{t+1} +(1+\sqrt{2})\sqrt{\frac{10\sigma^2\left(s\log(\frac{em}{s})+ss_0\log(\frac{ed}{s_0})\right)}{n}}\\
  &\le \left(\sqrt{3}+(3+\sqrt{2}+2\sqrt{3})\sqrt{\delta}+\frac{1+\sqrt{2}}{2}\right)\sqrt{ss_0}\lambda_{t+1}\\
  &\le \frac{3+\sqrt{2}+2\sqrt{3}}{2}\sqrt{ss_0}\lambda_{t+1},
\end{aligned}
$$
where the third and the last inequalities follow from $\delta < \frac{1}{(3+\sqrt{2}+2\sqrt{3})^2}\wedge \kappa$.
We prove that $\eqref{eq:iht4}$ holds for step $t+1$.
Finally, we have proved that the results in Theorem \ref{th3} hold for $t+1$ under the induction hypothesis. This completes the proof of Theorem \ref{th3}.
  \QEDB
\end{proof}

\bibliographystyle{unsrtnat}
\bibliography{ref}

\end{document}